\numberwithin{equation}{section}
\def\XXint#1#2#3{{\setbox0=\hbox{$#1{#2#3}{\int}$ }
\vcenter{\hbox{$#2#3$ }}\kern-.6\wd0}}
\theoremstyle{plain}\newtheorem{thm}{Theorem}[section]
\theoremstyle{plain}\newtheorem{prop}[thm]{Proposition}
\theoremstyle{plain}\newtheorem{lem}[thm]{Lemma}
\theoremstyle{plain}
\theoremstyle{plain}\newtheorem{remark}[thm]{Remark}
\theoremstyle{plain}
\theoremstyle{plain}
\newcommand {\sys}{$(\mathcal {S-BC}_e)$}
\newcommand {\sysss}{$(\mathcal {S}_{da}\mathcal{-BC}_{e})$}
\newcommand{\nr}{{|\!|}}
\newcommand{\st}{\int_0^t}
\newcommand{\s}{\int_{\Omega}\,}
\newcommand{\R}{{\mathbb{R}^+}}
\newcommand{\0}{{L^2}}
\title{\bf Global well-posedness of a three-dimensional Brinkman-Forchheimer-B\'enard convection model in porous media}
\author{
{   {\bf Edriss S. Titi}}\thanks{\footnotesize Department of Mathematics, Texas A\&M University, College Station, TX 77843, USA; and Department of Applied Mathematics and Theoretical Physics, University of Cambridge,
Cambridge CB3 0WA, UK.
E-mail address: titi@math.tamu.edu, \:Edriss.Titi@maths.cam.ac.uk}
 \and
 {\!\!  {\bf Saber Trabelsi}}\thanks{\footnotesize Science Program, Texas A\&M University at Qatar, P.O. Box 23874 Doha, Qatar. E-mail address: saber.trabelsi@qatar.tamu.edu  }
}
\date{April 7,2022}
\begin{document}

\maketitle
{\it This work is dedicated to Professor Jerome A. Goldstein on the occasion of his $80^{th}$ birthday}

\begin{abstract}
We consider three-dimensional (3D) Boussinesq convection system of an incompressible fluid in a closed sample of  a porous medium. Specifically, we introduce and analyze a 3D Brinkman-Forchheimer-B\'enard convection problem describing the behavior of an incompressible fluid in a porous medium between two plates heated from the bottom and cooled from the top. We show the existence and uniqueness of global in-time solutions, and the existence of absorbing balls in $L^2$ and $H^1$. Eventually, we comment on the applicability of a data assimilation algorithm to our system.
\end{abstract}
\vskip6pt
\noindent \textbf{MSC class:} 35Q30, 35Q35, 76B03, 86A10, 93C20, 37C50, 76B75, 34D06.\\
\noindent \textbf{Keywords:} Porous media, Brinkman-Forchheimer-extended Darcy model, 3D Navier-Stokes equations, B\'enard problem, Data assimilation.
\setcounter{tocdepth}{2}
\tableofcontents
\section{Introduction and main results}
In its original description, the B\'enard convection problem is concerned with the motion of incompressible flow confined between two horizontal plates (or walls) heated at the bottom and cooled at the top. Density differences occur due to the temperature difference across the fluid as regions of the fluid near the bottom boundary are heated and thus expand. In turn, the density differences result in a buoyancy force pushing the lighter fluid to the top and the heavier to the bottom and the governing equation of motion is modeled with the Boussinesq approximation. Applications of B\'enard convection range from weather forecasting to nuclear magnetic resonance pulsed-field-gradient diffusion measurements \cite{app1} and the security of liquefied natural gas packets \cite{app2} etc. In this paper we analyze the B\'enard problem in a porous medium modeled by the Brinkman-Forchheimer extended Darcy system for the momentum equation coupled with the heat convection.
\subsection{The physical model}
In this paper, we consider the B\'enard convection problem of an incompressible fluid  saturating an infinite horizontal layer of porous medium confined between two horizontal solid walls located at $z=0$ and $z=1$. The fluid is heated from below and cooled from the top with temperatures normalized to $1$ and $0$, respectively.  Using the Boussinesq approximation, the non-dimensional 3D equations governing the motion of the convected fluid through the porous medium are given by the Brinkman-Forchheimer-B\'enard  system
\begin{align*}
\mathcal {S}_o :\quad \left\lbrace\begin{array}{ll}
&\partial_t\,u-\nu\,\Delta\,u + (u\cdot \nabla)\,u+a\,|u|^{2\alpha}u+\nabla q= T\,{\bf e}_3,\\ \\
&\partial_t\,T-\kappa\,\Delta\,T +(u\cdot \nabla) T = 0,\\ \\
& \nabla\cdot u=0,\;u_{\vert_{t=0}}=u_0,\; T_{\vert{{t=0}}}=T_0,
\end{array}
\right.
\end{align*}
where ${\bf e}_3=(0,0,1)^T$. We consider this problem in an horizontal periodic domain  $\Omega_r:= [0,L]\times [0,L] \times[0,1]$\footnote{Obviously the domain can be chosen as $ [0,L_1]\times [0,L_2] \times[0,L_3]$ with $L_1,L_2,L_3>0$ and  $L_1\neq L_2\neq L_3$.}, and supplement it with the following boundary conditions (BC in short)
\begin{align*}
\mathcal {BC}_o:\quad \left\lbrace\begin{array}{ll}
&T(t,x,y,0)=1,\quad T(t,x,y,1)=0, \quad u_3(t,x,y,0)=u_3(t,x,y,1)=0,\\ \\
& \partial_3 u_1 (t,x,y,0)= \partial_3 u_1 (t,x,y,1)= \partial_3 u_2 (t,x,y,0)= \partial_3 u_2 (t,x,y,1)=0,\\ \\
&q,u,T, \text{periodic in the $x$ and $y$ variables with period $L$.}
\end{array}
\right.
\end{align*}
In system $(\mathcal {S}_o-\mathcal{BC}_o)$, the fluid velocity $u(t,x,y,z)$, the pressure $p=p(t,x,y,z)$, and the normalized temperature $T=T(t,x,y,z)$ are the unknowns. $\nu$ and $\kappa$ are positive constants representing the kinematic viscosity and the thermal diffusivity respectively, and $a$ is a positive coefficient that arises from the Darcy-Forcheheimer law. In this model we use the Brinkman-Forchheimer-extended Darcy (BFeD in short) model for flow in porous media, instead of the simple Darcy law. The velocity BC in $\mathcal {BC}_o$  are no-normal flow and stress-free at the solid boundary.
\vskip6pt
\noindent
Observe that $u=0, T=1-z$ and $q=z\left(1-\frac{z}2\right)$ is the pure conduction steady state of $(\mathcal {S}_o-\mathcal{BC}_o)$. Considering a fluctuation around this steady state
\begin{equation}\label{change}
\theta = T-(1-z)\quad \text{and}\quad p=q-z\left(1-\frac{z}2\right),
\end{equation}
then system $(\mathcal {S}_o-\mathcal{BC}_o)$ is equivalent to
\begin{align*}
\mathcal S :\quad \left\lbrace\begin{array}{ll}
&\partial_t\,u-\nu\,\Delta\,u + (u\cdot \nabla)\,u+a\,|u|^{2\alpha}u+\nabla p= \theta \,{\bf e}_3,\\ \\
&\partial_t\,\theta-\kappa\,\Delta\,\theta +(u\cdot \nabla) \,\theta = u\cdot \,{\bf e}_3,\\ \\
& \nabla\cdot u=0,\;u_{\vert_{t=0}}=u_0,\; \theta_{\vert{{t=0}}}=\theta_0,
\end{array}
\right.
\end{align*}
supplemented with the corresponding set of  BC; obtained from $\mathcal {BC}_o$ using \eqref{change}. More precisely
\begin{align*}
\mathcal {BC}:\quad\left\lbrace\begin{array}{ll}
&\theta(t,x,y,0)= \theta(t,x,y,1)=0,\quad u_3(t,x,y,0)=u_3(t,x,y,1)=0,\\ \\
& \partial_3 u_1 (t,x,y,0)= \partial_3 u_1 (t,x,y,1)= \partial_3 u_2 (t,x,y,0)= \partial_3 u_2 (t,x,y,1)=0,\\ \\
&p,u,\theta, \text{periodic in the $x$ and $y$ variables with respective periods $L$.}
\end{array}
\right.
\end{align*}
In the case of $a=0$, the first set of equations of system $\mathcal S$ ($\mathcal S_1$ in short) is nothing but the classical 3D Navier-Stokes equations forced by bouyancy, so that $\mathcal S$ corresponds to the classical 3D  Boussinesq equations. In this case, the mathematical analysis of the B\'enard system $(\mathcal {S}-\mathcal{BC})$ has been studied in \cite{FoITmanem} (see also \cite{Temam1} and references therein). The authors prove the existence and uniqueness of weak solutions in two-dimensional  space (2D), and the existence of weak solutions in 3D. Also, they proved  the existence of a finite-dimensional global attractor in 2D. Let us mention that the authors used the third line of $\mathcal {BC}$ and Dirichlet for $u$ and $\theta$ at the top and the bottom boundaries as BC.
\vskip6pt
\noindent
When $a>0$, equations $\mathcal S_1$ (with $\theta\equiv 0$) are the so-called the 3D BFeD model. This model was formally derived (cf., e.g., \cite{Hsu}) using Darcy-Forchheimer equation of porous media that states
\[ \nabla p= -\frac{\mu}{k}\,{\bf v}_f- \gamma \rho_f |{\bf v}_f|^2 \,{\bf v}_f,\]
where $\gamma>0, {\bf v}_f$ and $\rho_f$  stand for the the Forchheimer coefficient, the Forchheimer velocity and the density, respectively.  This equation add a correction to the Darcy law to model the increase of the pressure drop.  There is a rich literature dedicated to the mathematical analysis of this model and its variants, and we refer to, e.g., \cite{3,12,zbMATH06433791,zbMATH06840730,18,21,2,17,23,24,YCL,Cai,Oliveira}. Recently,  In \cite {Varga, MTT}, the  authors shows the existence and uniqueness of weak and strong solutions with Dirichlet boundary condition starting from a regular enough initial data. In the periodic setting, the authors of  \cite{MTT} improve the results of \cite{Varga} and prove the well posedness for initial data in $H^1(\mathbb T)$. Their result can be extended to the case of Dirichlet boundary condition using  the regularity estimates of the Stokes operator). An anisotropic viscous version of the BFeD system was studied in \cite{Hakima2}. Eventually, a relatively closed (from the mathematical point of view) MHD model was investigated in \cite{titi2018global} and a its Boussinesq-MHD version (without diffusion) in \cite{liu2019global}. Let us mention that in the latter reference, the uniqueness was obtained only for regular solutions, and our argument in the present contribution  combined with ideas from \cite{titi2018global} can improve the result.
\vskip6pt
\noindent
%Given an initial data $(u_0,\theta_0,p_0)$, the mathematical analysis of $(\mathcal {S}-\mathcal{BC})$ can be fulfilled  in a classical way. Namely, we construct a sequence of solutions $(u_n,\theta_n,p_n)_{n\in \mathbb N}$ of an approximate system of $(\mathcal {S}-\mathcal{BC})_{app}$, and pass to the limit as $n\to+\infty$ using {\it a priori} estimates a compactness argument, e.g. Aubin-Lions compactness Lemma. To obtain {\it a priori} estimates, we multiply the velocity equations in $(\mathcal {S}-\mathcal{BC})_{app}$ by $(1-\Delta)\,u_n$ and the temperature equation in  $(\mathcal {S}-\mathcal{BC})_{app}$ by $\theta_n$ and integrate over $\Omega_r$. Unfortunately, since  the velocity and the temperature are not periodic in the $z$ variable, the associated boundary terms do not vanish. Therefore,  we should handle these terms using, for instance, the maximal regularity of the Stokes operator.
To overcome technicalities related to the boundary conditions, we extend the domain $\Omega_r$ to $\Omega=  [0,L]\times [0,L] \times[-1,1]$ and consider problem $\mathcal S$, subjected to the following set of BC
\begin{align*}
\mathcal {BC}_e:\quad \left\lbrace\begin{array}{ll}
& \theta, u_3,  \,\text{periodic odd functions with respect to the $z$ variable with period $2$,}\\ \\
&u_1,u_2, \, \text{periodic even functions with respect to the $z$ variable with period $2$,}\\ \\
&p,u,\theta, \text{periodic in the $x$ and $y$ variables with period $L$}.
\end{array}
\right.
\end{align*}
It is rather easy to see that this set of  periodic-symmetric BC are equivariant under the solution operator of system  $\mathcal S$ supplemented with periodic conditions $\mathcal{BC}$. Most importantly, solutions with this periodic-symmetric BC $\mathcal {BC}_e$ clearly satisfy the physical BC $\mathcal{BC}$. Indeed, the fact that  $ \theta, u_3$ are periodic odd functions with respect to the $z$ variable with period $2$ implies that $\theta(t,x,y,1)=0, u_3(t,x,y,1)=0$. Equivalently, the fact that $u_1,u_2$ are periodic even functions with respect to the $z$ variable with period $2$ implies that  $\partial_3 u_1 (t,x,y,1)= \partial_3 u_2 (t,x,y,1)=0$. As a matter of fact, we shall focus on the mathematical analysis of system  $(\mathcal {S}-\mathcal{BC}_e)$. Obviously all the results obtained in the periodic boundary conditions setting will be valid for the physical system $(\mathcal {S}-\mathcal{BC})$ in $\Omega_r$. In other words,  the restriction to $\Omega_r$ of a solution $(u(x,y,z),\theta(x,y,z),p(x,y,z))$ of system $(\mathcal {S}-\mathcal{BC}_e)$  on $\Omega$, is a solution of $(\mathcal {S}-\mathcal{BC})$.
\subsection{The Main results}
Let us introduce the functional setting that we shall use along this paper. Let $\mathcal X_e$ be the set of trigonometric polynomials with period $L$  in the $x$ and $y$ variables, and are even with period $2$ in the $z$ variable. Let $\mathcal X_o$ be the set of trigonometric polynomials with period $L$ in the $x$ and $y$ variables, and are odd with period $2$ in the $z$ variable. Eventually, let $\mathcal Y$ be the set of divergence-free vector fields belonging to  $\mathcal X_e\times\mathcal X_e\times \mathcal X_o$. In the sequel, we will not make a difference in the notation of scalar and vector Lebesgue and Sobolev spaces, which shouldn't confuse the reader.
\vskip6pt
\noindent
Now, we define $H_0$ and $H_1$ as the closure of $\mathcal Y$ and $\mathcal X_o$ in $L^2(\Omega)$, respectively. We endow  $H_0$ and $H_1$ with the following scalar products

\[(u,v)_{H_0} = \sum_{i=1}^3\,\s u_i(x)\,v_i(x)\,dx,\quad \text{ and } \quad ( \varphi,\phi)_{H_1} = \s\varphi(x)\,\phi(x)\,dx. \]
The associated norms are given by $\nr u\nr_{H_0}= \left[(u,u)_{H_0}\right]^\frac12$ and $\nr \varphi\nr_{H_1}= \left[(\varphi,\varphi)_{H_1}\right]^\frac12$, respectively. Equivalently, we define $V_0$ and $V_1$ as the closure of $\mathcal Y$ and $\mathcal X_o$ in $H^1(\Omega)$, respectively. $V_0$ and $V_1$ are Hilbert spaces endowed with the following scalar products
\[(u,v)_{V_0} =(u,v)_{H_0}+ (u,v)_{\dot V_0}:= (u,v)_{H_0}+\sum_{i,j=1}^3\,\s\partial_j\,u_i(x)\,\partial_j\,v_i(x)\,dx,\]
and
\[(\varphi,\phi)_{V_1} = \sum_{j=1}^3\,\s\partial_j\,\varphi(x)\,\partial_j\,\phi(x)\,dx,\]
where $\partial_j$ denotes the partial derivative with respect to the variable $x$ if $j=1$, $y$ if $j=2$, and $z$ if $j=3$. The associated norms are given by  $\nr u\nr_{V_0}= \left[(u,u)_{V_0}\right]^\frac12$ and $\nr \varphi\nr_{V_1}= \left[(\varphi,\varphi)_{V_1}\right]^\frac12$, respectively. It is worth noticing that since $\theta, u_3 \in V_1$ are odd in the $z$ variable and periodic in the $x$ and $y$ variables, they have average zero over $\Omega$, thus by the Poincar\'e inequality, $\nr\cdot\nr_{V_1}$   defines a norm on $V_1$.
\vskip6pt

\noindent In the sequel, we shall  use the notation $\nr\cdot\nr_{p}$ for the $L^p(\Omega)$ norms, and $\nr \cdot\nr_{H^1}$ and $\nr \cdot\nr_{H^2}$  for $H^1(\Omega)$ and $H^2(\Omega)$ norms respectively. Now, let ${\mathcal A}_i$, for $i=0,1$, be the unbounded nonnegative self-adjoint linear operators with domains $D({\mathcal A}_i)=V_i \cap H^2(\Omega)$ satisfying $({\mathcal A}_i \varphi,\phi)_{H_i} = ((\varphi,\phi))_{V_i}$ for all $\varphi,\phi \in D({\mathcal A}_i)$ and $i=0,1$. The operator ${\mathcal A}_1$ is positive definite with compact inverse ${\mathcal A}_1^{-1}$. Observe that with periodic BC, we have $\mathcal A_0=-\Delta$, which is not invertible whose kernel consists of constant vector fields corresponding to the eigenvalue $0$. Thanks to the elliptic regularity of the operator $\mathcal A_0+I$ and Cauchy-Schwarz inequality, it is rather easy to see that $\nr u \nr_{H^2}\simeq\nr u\nr_{L^2}+\nr A_0 u\nr_{L^2}$. Consequently, there exists a basis of orthonormal eigenfunctions $w_{i,j}$ of $\mathcal A_i$ for $i=0,1$ and $j=1,2,\ldots$ such that ${\mathcal A}_iw_{i,j}=\lambda_{i,j}\,w_{i,j}$ where  $w_{i,j} \in H_i$ denotes the $j^{th}$ eigenfunction of ${\mathcal A}_i$ and $\lambda_{i,j}$ the associated positive eigenvalue satisfying  $0<\lambda_{i,j}\leq \lambda_{i,j+1}$ for all $i=0,1$ and $j=1,2,\ldots$ Let us mention that, by abuse of notation, we denote $\lambda_{0,1}$ the second eigenvalue of $\mathcal A_0$  since the first eigenvalue is $0$ as it was stated above, and we add the associated eigenvector to the basis spanning the kernel of $\mathcal A_0$. Thus, introducing  $\lambda:= \inf_{i=0,1; j =1,2,\ldots}\,\lambda_{i,j}>0$, the Poincar\'e and Sobolev inequalities read $ \lambda^{1/2}\nr \varphi\nr_{2} \leq \nr \nabla \varphi\nr_2$ and $\nr \varphi\nr_6 \leq \gamma\, \nr \nabla \varphi\nr_{2}$, respectively, where $\varphi=\theta, u_3,\partial_3 u_1,\partial_3 u_2, \partial_1 u_3, \partial_2 u_3$ thanks to the set of symmetric periodic BC, $\mathcal {BC}_e$ (with $\gamma>0$ being a constant depending only on the size of the domain).

%\vskip6pt
%\noindent
%Now, we denote the Helmholtz-Leray projector from $L^2(\Omega)$ onto $H_0$ by $\mathcal P_\sigma$ and let $V'_0$ and $V'_1$ denote the dual spaces of $V_0$ and $V_1$, respectively. Next, we define the map  $B_i:V_0\times V_i \rightarrow V'_i$ such that for all $\varphi\in V_0$ and $\phi,\psi\in V_i$
%\[ \left\langle B_i (\varphi,\phi),\psi\right\rangle_{V_i,V'_i} = ((\varphi\cdot\nabla)\phi,\psi)_{H_i},\text{ for all }  i=0,1.  \]
%It is rather clear that for all $i=0,1$, $B_i$ enjoys the following  property
%\[ \left\langle B_i (\varphi,\phi),\psi\right\rangle_{V_i,V'_i}=-\left\langle B_i (\varphi,\psi),\phi\right\rangle_{V_i,V'_i}, \text{ for all } \text{$\varphi\in V_0$ and $\phi,\psi\in V_i$}.\]
%Therefore, it holds
%\[ \left\langle B_i (\varphi,\phi),\phi\right\rangle_{V_i,V'_i}=0, \text{ for all } \text{$\varphi\in V_0$ and $\phi\in V_i$ and $i=0,1$}.\]
%With these notation, the system $\mathcal S$ reads
%\begin{align*}
% \mathcal S :\quad\left\lbrace\begin{array}{ll}
%&\partial_t\,u+\nu\,\mathcal A_0\,u +B_0(u,u) +a\,\, |u|^{2\alpha}u +\nabla p=  \mathcal P_\sigma \left(\theta\,{\bf e}_3\right),\\ \\
%&\partial_t\,\theta+\kappa\,\mathcal A_1\,\theta +B_1(u,\theta) = u\cdot {\bf e}_3,\\ \\
%& \nabla\cdot u=0,\;u_{\vert_{t=0}}=u_0,\; \theta_{\vert{{t=0}}}=\theta_0.\\
%\end{array}
%\right.
%\end{align*}
%\noindent
\vskip6pt
\noindent Now, we are able to state our first result about the existence of solutions to system \sys
\begin{thm} \label{thmweak}
Let $(u_0,\theta_0)\in H_0 \times H_1, \alpha\geq 0$ and $a,\nu,\kappa> 0$, then system  \sys\, has global weak solutions $(u,\theta)$ satisfying
\begin{align*}
&u(x,t) \in C_b^0(\mathbb R^+;H_0)\cap L^2_{\rm loc}(\mathbb R^+; V_0) \cap L^{{2\alpha+2}}_{\rm loc}(\mathbb R^+;L^{{2\alpha+2}}(\Omega)),\\&{\rm and}\\
& \theta(x,t) \in C_b^0(\mathbb R^+;H_1)\cap L^2_{\rm loc}(\mathbb R^+; V_1).
\end{align*}
In particular
\begin{equation}\label{limsupthm} \limsup_{t\to+\infty}\,\nr \theta(t) \nr_{H_1}, \quad\limsup_{t\to+\infty}\,\nr u(t) \nr_{H_0} \leq \frac{4a\,L^2}{\min{(a,\kappa\lambda})}\, \left(\frac{2a\kappa\lambda}{a\kappa\lambda +32}\right)^{\frac{\alpha+1}{\alpha}}.
\end{equation}
In addition, if $\alpha>1$, then the weak solutions depend continuously on the initial data in the $H_0 \times H^{-1}(\Omega)$ topology, in particular they are unique.
\end{thm}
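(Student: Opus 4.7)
The plan is a standard Faedo-Galerkin construction together with monotonicity/compactness for passage to the limit, followed by a weighted energy estimate for continuous dependence. Using the orthonormal bases $\{w_{0,j}\}$, $\{w_{1,j}\}$ from the functional setting, I project the momentum equation onto $\mathrm{span}\{w_{0,1},\dots,w_{0,n}\}$ and the heat equation onto $\mathrm{span}\{w_{1,1},\dots,w_{1,n}\}$, obtaining a coupled Carath\'eodory ODE system for the Fourier coefficients. Local existence of approximants $(u^n,\theta^n)$ is standard, and global existence will follow from the uniform bounds derived next.

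The a priori estimate comes from testing the Galerkin momentum and heat equations with $u^n$ and $\theta^n$ respectively and summing; incompressibility together with the symmetries built into $\mathcal{BC}_e$ kills the two convective trilinears, leaving
\begin{equation*}
\tfrac12\tfrac{d}{dt}\bigl(\n{u^n}{H_0}^2+\n{\theta^n}{H_1}^2\bigr)+\nu\n{\nabla u^n}{2}^2+\kappa\n{\nabla\theta^n}{2}^2+a\n{u^n}{2\alpha+2}^{2\alpha+2}=2\int_\Omega\theta^n\,(u^n)_3\,dx.
\end{equation*}
Since $\theta^n$ and $(u^n)_3$ are odd in $z$ and therefore of zero mean, Poincar\'e applies to both; the right-hand side is absorbed by Young into $\kappa\n{\nabla\theta^n}{2}^2$ and a small fraction of either $\nu\n{\nabla u^n}{2}^2$ or, via the H\"older bound $\n{u^n}{2}^{2\alpha+2}\le|\Omega|^\alpha\n{u^n}{2\alpha+2}^{2\alpha+2}$, of $a\n{u^n}{2\alpha+2}^{2\alpha+2}$. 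Gronwall produces the announced uniform bounds on $u^n$ and $\theta^n$ in the energy spaces, and going back to the equations controls $\partial_t u^n,\partial_t\theta^n$ in suitable negative-order spaces. Aubin-Lions compactness extracts a subsequence converging to a limit $(u,\theta)$; passage to the limit is routine except for the Forchheimer term, which I treat either by strong $L^{2\alpha+2}_{\rm loc}$ convergence of the approximants or, more robustly, by the Minty trick using the monotonicity of $v\mapsto|v|^{2\alpha}v$. Estimate \eqref{limsupthm} is the asymptotic form of the same differential inequality with the Young coefficients chosen to optimize the resulting constant.

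The central difficulty is the continuous dependence (hence uniqueness) when $\alpha>1$. Let $(u,\theta)$, $(\tilde u,\tilde\theta)$ be two solutions and set $U:=u-\tilde u$, $\Theta:=\theta-\tilde\theta$. Testing the momentum difference with $U$ in $H_0$ and the temperature difference with $\mathcal A_1^{-1}\Theta$ gives
\begin{equation*}
\tfrac12\tfrac{d}{dt}\bigl(\n{U}{H_0}^2+\n{\Theta}{H^{-1}}^2\bigr)+\nu\n{\nabla U}{2}^2+\kappa\n{\Theta}{H_1}^2+a\mathcal M\le \mathcal T_u+\mathcal T_\theta+\mathcal C,
\end{equation*}
where the Forchheimer monotonicity supplies the key lower bound $\mathcal M:=\int(|u|^{2\alpha}u-|\tilde u|^{2\alpha}\tilde u)\cdot U\ge c_\alpha\int|U|^2(|u|^{2\alpha}+|\tilde u|^{2\alpha})\ge c_\alpha'\n{U}{2\alpha+2}^{2\alpha+2}$, the buoyancy coupling $\mathcal C$ and the heat trilinear $\mathcal T_\theta$ are controlled by duality and Young, and the real obstacle is the momentum trilinear
\begin{equation*}
\mathcal T_u=\Bigl|\int(U\cdot\nabla)\tilde u\cdot U\,dx\Bigr|\le\n{U}{2\alpha+2}\n{\nabla\tilde u}{2}\n{U}{r},\qquad r=\frac{2(\alpha+1)}{\alpha}.
\end{equation*}
The hypothesis $\alpha>1$ is precisely what makes $r<4$, so that $\n{U}{r}$ interpolates between $\n{U}{2}$ and $\n{U}{2\alpha+2}$ with interpolation exponent $1/\alpha$ on the $L^{2\alpha+2}$ side. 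A Young split then absorbs the high $L^{2\alpha+2}$-power into $a\mathcal M$ and leaves a Gronwall kernel proportional to $1+\n{\nabla\tilde u}{2}^{2\alpha/(2\alpha-1)}$, which is locally integrable by the already-established energy bound on $\tilde u$. Gronwall gives $\n{U(t)}{H_0}^2+\n{\Theta(t)}{H^{-1}}^2\lesssim \n{U(0)}{H_0}^2+\n{\Theta(0)}{H^{-1}}^2$ on every bounded time interval, which is the claimed continuous dependence. At $\alpha=1$ the exponent $r=4$ is borderline and this scheme breaks down without extra regularity of $\tilde u$, explaining the sharpness of the hypothesis.
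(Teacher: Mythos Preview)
Your existence argument (Galerkin, energy bounds, Aubin--Lions, and monotonicity/Minty for the Forchheimer term) is essentially the paper's and is fine.

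The continuous-dependence argument, however, has a genuine gap in the handling of the momentum trilinear $\mathcal T_u$. After your H\"older step and the interpolation $\n{U}{r}\le\n{U}{2}^{1-1/\alpha}\n{U}{2\alpha+2}^{1/\alpha}$ you arrive at
\[
\mathcal T_u\le \n{\nabla\tilde u}{2}\,\n{U}{2\alpha+2}^{(\alpha+1)/\alpha}\,\n{U}{2}^{(\alpha-1)/\alpha}.
\]
To absorb the $L^{2\alpha+2}$ factor into $\mathcal M\ge c\n{U}{2\alpha+2}^{2\alpha+2}$ you must take the Young exponent $p=2\alpha$; the conjugate $q=2\alpha/(2\alpha-1)$ then leaves the remainder
\[
C\,\n{\nabla\tilde u}{2}^{2\alpha/(2\alpha-1)}\,\n{U}{2}^{2(\alpha-1)/(2\alpha-1)},
\]
and the exponent $2(\alpha-1)/(2\alpha-1)$ is \emph{strictly less than} $2$ for every $\alpha>1$. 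So you do not obtain a linear Gronwall inequality $E'\le g(t)E$ but rather $E'\le g(t)E^{\gamma}$ with $\gamma<1$, and such an inequality does \emph{not} force $E\equiv 0$ from $E(0)=0$ (recall $E'=\sqrt{E}$). Any further Young split designed to raise the power on $\n{U}{2}$ to $2$ necessarily produces a free additive term such as $C\n{\nabla\tilde u}{2}^{2}$ not multiplied by $E$, which again kills uniqueness. The observation that $r<4$ is therefore not the operative mechanism.

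The fix, which is precisely what the paper does, has two ingredients. First, integrate by parts using $\nabla\cdot U=0$ to rewrite $\mathcal T_u=\bigl|\int_\Omega(U\cdot\nabla)U\cdot\tilde u\,dx\bigr|$, so that $\tilde u$ rather than $\nabla\tilde u$ appears. Second, do not discard information by weakening the monotonicity all the way down to $\n{U}{2\alpha+2}^{2\alpha+2}$; retain the sharper lower bound $\mathcal M\ge\delta\int_\Omega|U|^2(|u|+|\tilde u|)^{2\alpha}\ge\delta\,\n{\,|\tilde u|^{\alpha}U\,}{2}^{2}$. Then the H\"older decomposition $|\tilde u|\,|U|^{1/\alpha}\cdot|U|^{1-1/\alpha}\cdot|\nabla U|$ with exponents $(2\alpha,\tfrac{2\alpha}{\alpha-1},2)$ yields
\[
\mathcal T_u\le \n{\,|\tilde u|^{\alpha}U\,}{2}^{1/\alpha}\,\n{U}{2}^{1-1/\alpha}\,\n{\nabla U}{2},
\]
and two successive applications of Young absorb into $\nu\n{\nabla U}{2}^{2}$ and into $a\delta\,\n{\,|\tilde u|^{\alpha}U\,}{2}^{2}\le a\,\mathcal M$, leaving exactly $C\n{U}{2}^{2}$ --- the linear Gronwall term you need. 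Note that this route uses \emph{both} the viscous dissipation and the Forchheimer damping, whereas your scheme tried to rely on the damping alone.
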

\noindent This Theorem ensures the existence of weak solutions to system \sys\, and their uniqueness for  a damping parameter's range $\alpha>1$. Moreover, it shows the existence of absorbing ball in $H_0\times H_1$ for the solutions of \sys\,. This property plays a crucial role in the design and  analysis of a data assimilation (DA) algorithm for system \sys\, (see section \ref{dasec}). Also, we have the following
\begin{thm} \label{thmstrong}
Let $(u_0,\theta_0)\in V_0 \times H_1, \alpha > 1$ and $a,\nu,\kappa>0$, then system  \sys\, has global  solutions $(u,\theta)$ satisfying
\begin{align*}
&u(x,t) \in C_b^0(\mathbb R^+;V_0)\cap L^2_{\rm loc}(\mathbb R^+; V_0\cap H^2(\Omega)) \cap L^{{2\alpha+2}}_{\rm loc}(\mathbb R^+;L^{{2\alpha+2}}(\Omega)),\\&{\rm and}\\
& \theta(x,t) \in C_b^0(\mathbb R^+;H_1)\cap L^2_{\rm loc}(\mathbb R^+; V_1),
\end{align*}
In particular, in addition to \eqref{limsupthm}, we have
\begin{align}\label{limsupgradthmstraong}
\limsup_{t\to+\infty} \,\nr  u(t)\nr^2_{\dot V_0} &\leq\frac{\Gamma_2\left[(a+1)\Gamma_1 + 4aL^2\right]+(3+a)\Gamma_1 + 4aL^2}{2\nu},
\end{align}
where
\[
 \Gamma_1:=\left(\frac{2a\kappa\lambda}{a\kappa\lambda +32}\right)^{\frac{\alpha+1}{\alpha}}\,\frac{4a\,L^2}{\min{(a,\kappa\lambda})} \quad {\text{and}}\quad \Gamma_2:=\left(\frac{a\nu^\alpha }{2^{2-\alpha}}\right)^{1/1-\alpha}.
\]
Moreover,  if $\theta_0\in L^6(\Omega)$, then the  solutions are unique. Furthermore, if $u_0\in V_0\cap L^{2\alpha+2}(\Omega)$, then $u\in L^{\infty}_{\rm loc}(\mathbb R^+;L^{{2\alpha+2}}(\Omega))$ and $\partial _t u\in L^2_{\rm loc}(\mathbb R^+; H_0)$. Also, the solutions depend continuously on the initial data in the $H_0 \times H^{-1}(\Omega)$ topology, in particular they are unique.
\end{thm}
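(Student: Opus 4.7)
The plan is to construct strong solutions by Galerkin truncation in the joint eigenbasis of $\mathcal A_0$ and $\mathcal A_1$, and pass to the limit using uniform $V_0$ estimates on the velocity combined with the $H_1$ estimates on the temperature already furnished by Theorem \ref{thmweak}. The new ingredient is a uniform-in-$n$ control of $\nr\nabla u^{(n)}\nr_2^2$ together with $\int_0^t\nr\mathcal A_0 u^{(n)}\nr_2^2$. I obtain this by pairing the momentum equation with $\mathcal A_0 u$, which gives
\begin{equation*}
\tfrac{1}{2}\tfrac{d}{dt}\nr\nabla u\nr_2^2+\nu\nr\mathcal A_0 u\nr_2^2+a\bigl(|u|^{2\alpha}u,\mathcal A_0 u\bigr) = -\bigl((u\cdot\nabla)u,\mathcal A_0 u\bigr)+\bigl(\theta\,{\bf e}_3,\mathcal A_0 u\bigr).
\end{equation*}
Integration by parts and the identity $\partial_k|u|^{2\alpha}=2\alpha|u|^{2\alpha-2}(u\cdot\partial_k u)$ convert the damping term into $\int|u|^{2\alpha}|\nabla u|^2+2\alpha\int|u|^{2\alpha-2}\sum_k(u\cdot\partial_k u)^2\geq 0$, while the buoyancy term is handled via Cauchy--Schwarz and Young using the $H_1$-bound on $\theta$ from Theorem \ref{thmweak}.

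The main obstacle is the trilinear term $((u\cdot\nabla)u,\mathcal A_0 u)$: the naive Agmon bound $\nr u\nr_\infty\lesssim \nr\nabla u\nr_2^{1/2}\nr\mathcal A_0 u\nr_2^{1/2}$ produces a borderline $\nr\nabla u\nr_2^{3/2}\nr\mathcal A_0 u\nr_2^{3/2}$ that cannot be absorbed into $\nu\nr\mathcal A_0 u\nr_2^2$ alone. The restriction $\alpha>1$ is exactly what saves the day. By H\"older with exponents $\tfrac{1}{2\alpha+2}+\tfrac{1}{p}+\tfrac{1}{2}=1$,
\begin{equation*}
|((u\cdot\nabla)u,\mathcal A_0 u)|\leq \nr u\nr_{2\alpha+2}\,\nr\nabla u\nr_p\,\nr\mathcal A_0 u\nr_2,\qquad p=\tfrac{2\alpha+2}{\alpha}\in(2,4),
\end{equation*}
and $\nr\nabla u\nr_p$ interpolates between $\nr\nabla u\nr_2$ and $\nr\nabla u\nr_6\lesssim\nr\mathcal A_0 u\nr_2$. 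Absorbing the highest power of $\nr\mathcal A_0 u\nr_2$ into the viscous term and invoking the $L^{2\alpha+2}_{\rm loc}L^{2\alpha+2}$-bound on $u$ from Theorem \ref{thmweak} produces a Gronwall-closable differential inequality for $\nr\nabla u\nr_2^2$; the uniform-in-time estimate \eqref{limsupgradthmstraong} then follows by combining this with the $H_0\times H_1$ absorbing-ball bound \eqref{limsupthm}.

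For uniqueness under $\theta_0\in L^6$, I would first propagate the $L^6$-norm of $\theta$ by pairing the temperature equation with $|\theta|^4\theta$, using the $V_0$-regularity of $u$ to control the transport term. Given two solutions with the same data, let $U=u_1-u_2$, $\Theta=\theta_1-\theta_2$; pairing the momentum difference with $U$ (where monotonicity of the Forchheimer operator contributes nonnegatively) and the temperature difference with $\mathcal A_1^{-1}\Theta$, the critical cross term $(U\cdot\nabla\theta_1,\mathcal A_1^{-1}\Theta)$ is bounded by $\nr\theta_1\nr_6\nr U\nr_3\nr\mathcal A_1^{-1}\Theta\nr_{H^1}$, after which Gronwall closes the argument in the $H_0\times H^{-1}$ topology. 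For the further regularity under $u_0\in V_0\cap L^{2\alpha+2}$, I pair the momentum equation with $\partial_t u$; this produces $\nr\partial_t u\nr_2^2$ simultaneously with $\tfrac{d}{dt}$ of $\tfrac{\nu}{2}\nr\nabla u\nr_2^2+\tfrac{a}{2\alpha+2}\nr u\nr_{2\alpha+2}^{2\alpha+2}$, and the convective contribution is absorbed by Young using the $H^2$-bound established above. Finally, the $H_0\times H^{-1}$ continuous-dependence claim follows by repeating verbatim the difference argument of Theorem \ref{thmweak} inside this stronger class.
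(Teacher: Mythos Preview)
Your overall scheme is the paper's, and the uniqueness and $\partial_t u$ arguments are essentially those of the paper. The serious issue is your treatment of the trilinear term in the $V_0$ estimate.

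You discard the damping contribution $a\int |u|^{2\alpha}|\nabla u|^2\,dx$ as merely nonnegative and then try to close using only viscosity and the $L^{2\alpha+2}_{\rm loc}L^{2\alpha+2}$ bound on $u$. With your H\"older splitting $\nr u\nr_{2\alpha+2}\,\nr\nabla u\nr_{p}\,\nr\mathcal A_0 u\nr_2$, $p=\tfrac{2\alpha+2}{\alpha}$, interpolating $\nr\nabla u\nr_p\le C\nr\nabla u\nr_2^{1-\sigma}\nr\mathcal A_0 u\nr_2^{\sigma}$ with $\sigma=\tfrac{3}{2\alpha+2}$ and applying Young to peel off $\nr\mathcal A_0 u\nr_2^2$, the Gronwall coefficient becomes $\nr u(t)\nr_{2\alpha+2}^{q'}$ with $q'=\tfrac{4\alpha+4}{2\alpha-1}$. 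For $\int_0^T\nr u\nr_{2\alpha+2}^{q'}\,dt<\infty$ you need $q'\le 2\alpha+2$, i.e.\ $\alpha\ge \tfrac32$. So your argument leaves the range $1<\alpha<\tfrac32$ uncovered, and even for $\alpha\ge\tfrac32$ it produces a time-dependent Gronwall coefficient that cannot deliver the explicit absorbing-ball constant \eqref{limsupgradthmstraong}.

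The paper's remedy is precisely \emph{not} to throw the damping away. It decomposes the integrand as $|u|\,|\nabla u|^{1/\alpha}\cdot|\nabla u|^{1-1/\alpha}\cdot|\Delta u|$ and applies H\"older with exponents $(2\alpha,\tfrac{2\alpha}{\alpha-1},2)$, which manufactures exactly the quantity $\nr\,|u|^{\alpha}\nabla u\,\nr_2$ matching the damping term on the left. Two applications of Young then give
\[
\bigl|\,(\,(u\cdot\nabla)u,\,\mathcal A_0 u\,)\bigr|\ \le\ \tfrac{a}{2}\,\nr\,|u|^{\alpha}\nabla u\,\nr_2^2+\tfrac{\nu}{4}\,\nr\Delta u\nr_2^2+\Gamma_2\,\nr\nabla u\nr_2^2,
\]
with the \emph{constant} $\Gamma_2=(a\nu^{\alpha}/2^{2-\alpha})^{1/(1-\alpha)}$. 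The first term is absorbed by the damping, the second by viscosity, and one is left with the linear differential inequality $\tfrac{d}{dt}\nr u\nr_{\dot V_0}^2+\nu\nr\Delta u\nr_2^2\le \nu^{-1}\nr\theta\nr_{H_1}^2+\Gamma_2\nr u\nr_{\dot V_0}^2$, valid for every $\alpha>1$. The constant coefficient is what subsequently allows the uniform Gronwall lemma to yield the explicit bound \eqref{limsupgradthmstraong}. In short: keep the damping term and tailor the trilinear splitting to it; your interpolation route is not wrong in spirit, but it is too coarse to reach the full range $\alpha>1$ or the stated constants.
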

\noindent This Theorem shows the existence of global solutions with regular initial velocity. For a given temperature $\theta\in H_1$, the velocity solution $u\in V_0$ is a classical solution of the BFeD equations with the forcing term $\theta\,{\bf e}_3$. Equivalently, for a given velocity $u\in V_0$, the solution $\theta$ of the thermal diffusion equation is a weak solution. Of course, if the initial temperature is considered in $V_1$, then we can extend the Theorem and reach strong solutions for both velocity and temperature (see Remark \ref{strongtheta}).  Most importantly, this Theorem shows the existence of an absorbing ball  in $V_0\times H_1$ for the solutions of \sys. Obviously, the Theorem still holds if the initial temperature is in $L^\infty(\Omega)$, which corresponds to the physical case. Also, if one consider initial temperature in $V_1$, then system \sys\, have an absorbing ball  in $V_0\times V_1$. Eventually, let us mention that in Theorems \ref{thmweak} and \ref{thmstrong}, we do not discuss the regularity of the pressure which can be recovered from the velocity in a classical way using the divergence free property and standard elliptic regularity. We refer to any textbook for details about this point (i.e., \cite{sohr2012navier,Temam})
%\begin{remark}
%Let us mention that in Theorem \ref{thmstrong}, $\Theta, \Lambda$ and $\overline\Sigma$ depend on the initial data $(\theta_0,u_0)$. However, observe that the bounds \eqref{limsupthm} still hold and that is also possible at the price of easy, but tedious, calculation to show the existence of a positive constant $A$ independent of the initial data such that $\limsup_{t\to+\infty}\nr \nabla u (t) \nr_{2} \leq A$. We shall skip the proof of this point and refer the reader to \cite{MTT} for a similar argument that allows to conclude here.
%\end{remark}
{\center {\it This work is dedicated to Professor Jerome A. Goldstein on the occasion of his 80$^{\text{\it th}}$ birthday as a token of admiration for his contribution to the mathematical analysis of Partial Differential Equations and their applications.}}
\section{Proof of well-posedness}
First, let us recall the following version of Young's inequality,
\[ab\leq \epsilon a^p + \epsilon^{-q/p} b^q,\quad \frac1p+\frac1q=1, \quad \text {for all} \quad \epsilon>0 \quad \text{and} \quad a,b\geq 0.\]

\subsection{Galerkin approximation system}\label{locexistence}
The well posedness of \sys\, can be shown using a standard approximation argument. First, one uses the Faedo-Galerkin approximation method based on an orthonormal basis of eigenfuctions of the operators $A_0$ and $A_1$ (see, e.g. \cite{Temam,Temam1}) to show the existence and uniqueness of approximate solutions. Next, one obtains uniform {\it a priori} estimates using the approximate system, and eventually pass to the limit using compactness arguments, e.g., Aubin-Lions  Lemma, \cite{Lions} (Lemma I-6.5).
\vskip6pt
\noindent
First, we define the bilinear forms $B_0(\cdot,\cdot): V_0\times \mathcal D(A_1) \longrightarrow H_0$ and $B_1(\cdot,\cdot): V_1\times \mathcal D(A_1) \longrightarrow H_1$ such that
\[B_0(u,v):=\mathbb P\,(u\cdot\nabla)v,\quad \text{and}\quad B_1(u,\theta):=(u\cdot\nabla)\theta.\]
\vskip6pt
\noindent where $\mathbb P$ denotes the Leray projector on divergence-free vector fields. Let  $\{\mathcal W_k(x)\} \subset D(\mathcal A_0)$ and $\{\mathcal {\tilde W}_k(x)\}\subset D(\mathcal A_1)$ be orthonormal basis of $H_0$ and $H_1$, consisting  of eigenfunctions of $\mathcal A_0$ and $\mathcal A_1$, respectively. Denote by
$V^m_0=\hbox {span} \{\mathcal W_1,\mathcal  W_2,\ldots, \mathcal  W_m\}$ and $V^m_1=\hbox {span}\{\mathcal {\tilde W}_1, \mathcal {\tilde W}_2,\ldots,  \mathcal {\tilde W}_m\}$, and $\mathcal P_m:H_0\to V_0^m$ and $\mathcal {\tilde P}_m:H_1\to V_1^m$ be the corresponding the projections. Now, set
\begin{align*}
&{  u}_m(x,t)=\sum_{i=1}^m g_m^i(t)\,\mathcal W_i(x) \in V^m_0 \quad \text{ and} \quad  {  \theta}_m(x,t)=\sum_{i=1}^m \tilde g_m^i(t)\,\mathcal {\tilde W}_i(x) \in V_1^m.
\end{align*}
by the solution of the Galerkin approximation system associated with \sys, namely, the unknown coefficients $g_m^i= ( u_m, \mathcal W_i)_{H_0}$ and $\tilde g_m^i= (   \theta_m, \mathcal {\tilde W}_i)_{H_1}$, for $i=1,2, \ldots, m$, solve  the following system of ordinary differential equations
\begin{subequations}\label{eqpart}
\begin{alignat}{3}
\label{eqpart1}& \frac d{dt} u_m +  \nu\,\mathcal A_0\,u_m +\mathcal P_m\,B_0(u_m,u_m) +a\,\mathcal P_m\,\left( \mathbb P\,\left(|u_m|^{2\alpha}u_m\right) \right)  -\mathcal P_m\,\left(\mathbb P\,\left(\theta_m\,{\bf e}_3\right)\right)=  0,\\
&  \frac d{dt} \theta_m + \kappa\,\mathcal A_1\,\theta_m +\mathcal {\tilde P}_m\,B_1(u_m,\theta_m)-\mathcal {\tilde P}_m\, \left(u_m\cdot {\bf e}_3\right) =0,\label{eqpart2}\\
\label{eqpart3}  &u_m(0)=\mathcal P_m u_0, \quad \theta_m(0)=\mathcal {\tilde P}_m\theta_0,
\end{alignat}
\end{subequations}
where  $(u_0,\theta_0)\in H_0\times H_1$. Observe that for $(u_0,\theta_0)\in H_0\times H_1$ one has
\begin{equation}\label{conn}
(\mathcal P_m{u_0},\mathcal {\tilde P}_m{\theta_0}) \longrightarrow ({u_0},{\theta_0}) \quad \text{ strongly in }H_0\times H_1\quad \text{as} \quad m \rightarrow +\infty.
\end{equation}
Since the vector field in system \eqref{eqpart} is locally Lipschitz in $ V_0^m \times V_1^m$, the system admits a unique solution $(u_m,\theta_m)\in C^1([0,\tau_m],V_0^m)\times C^1([0,\tau_m],V_1^m)$, for some   $\tau_m >0$.

\subsection{A priori estimates and existence of weak solutions}
Let $\mathcal T \in (0,\infty)$ be arbitrary. Our goal is to show that the unique solution of the Galerkin approximation system \eqref{eqpart} exists on the interval $[0, \mathcal T]$. Let $[0,\tau_m^*)$ is the maximal interval existence of solutions to \eqref{eqpart}, and assume by contradiction that  $\tau_m^* \le \mathcal T <\infty$. This in turn implies that

\begin{equation}\label{limsup}
\limsup_{t\to (\tau_m^*)^-} \Big(\nr u_m (t)\nr^2_{H_0} +\nr \theta_m (t)\nr^2_{H_1}\Big) = \infty.
\end{equation}
Next, we focus on the interval $[0,\tau_m^*)$ and establish {\it a priori} estimates for the solution of  \eqref{eqpart}. Taking the $H_0-$inner product of equation (\ref{eqpart}) with $u_m(t)$ and   thanks to Cauchy-Schwarz and Young inequalities one obtains
\begin{align}\label{galer1}
\frac12\,\frac{d}{dt}\, \nr u_m \nr^2_{H_0}+\nu\,\nr u_m\nr^2_{\dot V_0} +a\,\nr u_m\nr^{2\alpha+2}_{2\alpha+2}&  \leq \frac1{2}\,\nr \theta_m\nr^2_{H_1}+\frac{1}{2}\,\nr u_m\nr^2_{H_0}.
\end{align}
Similarly, we take the $H_1-$inner produce of equation (\ref{eqpart})$_2$  with $\theta_m(t)$  to obtain
\begin{align}\label{galer2}
\frac12\,\frac{d}{dt}\, \nr \theta_m \nr^2_{H_1}+\kappa\,\nr \theta_m\nr^2_{V_1} &  \leq \frac1{2}\,\nr \theta_m\nr^2_{H_1}+\frac{1}{2}\,\nr u_m\nr^2_{H_0}.
\end{align}
Summing \eqref{galer1} and \eqref{galer2}, we get for all $t\in [0,\tau_m^*)$
\begin{align}\label{allinall0}
\frac12\,\frac{d}{dt}\, \nr u_m \nr^2_{H_0}+\frac12\,\frac{d}{dt}\, \nr \theta_m \nr^2_{H_1}+\nu\,\nr u_m\nr^2_{\dot V_0}+\kappa\,\nr \theta_m\nr^2_{V_1} +a\,\nr u_m\nr^{2\alpha+2}_{2\alpha+2}&  \leq \nr \theta_m\nr^2_{H_1}+\nr u_m\nr^2_{H_0}.
\end{align}
Now, using Gronwall's inequality, we obtain
\begin{align*}
 \nr u_m(t) \nr^2_{H_0}+ \nr \theta_m(t) \nr^2_{H_1} \leq \left(\nr \mathcal {P}_m{u_0}\nr^2_{H_0}+\nr {\mathcal {\tilde P}_m \theta_0}\nr^2_{H_1}\right)\,e^{2t}
 \leq \left(\nr {u_0}\nr^2_{H_0}+\nr \theta_0\nr^2_{H_1}\right)\,e^{2t}.
\end{align*}
Therefore, integrating the inequality \eqref{allinall0}, we get
\begin{align*}
 \nr u_m(t) \nr^2_{H_0}+ \nr \theta_m(t) \nr^2_{H_1}&+2\nu\,\st\nr u_m(s)\nr^2_{\dot V_0}\,ds+2\kappa\,\st\nr \theta_m(s)\nr^2_{V_1} \,ds+2a\,\s\nr u_m(s)\nr^{2\alpha+2}_{2\alpha+2}\,ds\\&  \leq \left(\nr\mathcal {P}_m {u_0}\nr^2_{H_0}+ \nr \mathcal {\tilde P}_m{\theta_0}\nr^2_{H_1}\right)\,(1+e^{2t})\leq  \left(\nr {u_0}\nr^2_{H_0}+\nr {\theta_0}\nr^2_{H_1}\right)\,(1+e^{2t}),
\end{align*}
for all $t\in [0,\tau_m^*)$. The above contradicts  \eqref{limsup}. Therefore, the solution exists on $[0,\mathcal T]$. Moreover,  we also conclude from the above  that $(u_m,\theta_m)$ remains bounded uniformly in $m$ in
\begin{equation}\label{forconvergence} L^\infty([0,{\mathcal T}],H_0) \times L^\infty([0,{\mathcal T}],H_1) \cap L^2([0,{\mathcal T}],V_0) \times L^2([0,{\mathcal T}],V_1).
\end{equation}
Furthermore, $u_m$ remains bounded in $L^{2\alpha+2}([0,{\mathcal T}], L^{2\alpha+2}(\Omega))$ so that $|u_m|^{2\alpha}u_m$ remains bounded, uniformly in $m$, in
\begin{equation}\label{tousejustbelow} L^{\frac{2\alpha+2}{2\alpha+1}}([0,{\mathcal T}], L^{\frac{2\alpha+2}{2\alpha+1}}(\Omega)),\quad \text{since} \quad \nr |u_m|^{2\alpha}u_m\nr_{\frac{2\alpha+2}{2\alpha+1}}^{\frac{2\alpha+2}{2\alpha+1}}= \nr u_m\nr^{2\alpha+2}_{2\alpha+2}.
\end{equation}
Now, from system \eqref{eqpart} and by virtue of   \eqref{tousejustbelow} it is rather standard to see that $ (\frac{d}{dt} u_m,\frac{d}{dt} \theta_m)$ remains bounded, uniformly in $m$, in
\begin{equation*}
 L^2([0,{\mathcal T}], V_0^{'})+ L^{\frac{2\alpha+2}{2\alpha+1}}([0,{\mathcal T}],L^{\frac{2\alpha+2}{2\alpha+1}}(\Omega) ) \times L^2([0,{\mathcal T}], V_1^{'}).
\end{equation*}
Now, thanks to \eqref{forconvergence}, we infer that there exist $u$ and $\theta$, and there exist two subsequences, that we still denote $(u_m,\theta_m)$, such that
\begin{align}\label{con1}
(u_m,\theta_m) \rightarrow (u,\theta) \quad \text{ weakly}-* \quad \text{in } \quad L^\infty([0,{\mathcal T}],H_0) \times L^\infty([0,{\mathcal T}],H_1) \quad \text{as} \quad m \rightarrow +\infty,
\end{align}
and
\begin{align}\label{con2}
(u_m,\theta_m) \rightarrow (u,\theta) \quad \text{ weakly in } \quad L^2([0,{\mathcal T}],V_0) \times L^2([0,{\mathcal T}],V_1) \quad \text{as} \quad m \rightarrow +\infty,
\end{align}
 Furthermore, thanks to \eqref{tousejustbelow}, it holds
 \begin{align}\label{con3}
u_m \rightarrow u \quad \text{ weakly in } \quad L^{2\alpha+2}([0,{\mathcal T}], L^{2\alpha+2}(\Omega)) \quad \text{as} \quad m \rightarrow +\infty,
\end{align}
and there exist $v$ such that
 \begin{align}\label{con4}
|u_m|^{2\alpha}u_m \rightarrow v \quad \text{ weakly in } \quad L^{\frac{2\alpha+2}{2\alpha+1}}([0,{\mathcal T}],L^{\frac{2\alpha+2}{2\alpha+1}}(\Omega) ) \quad \text{as} \quad m \rightarrow +\infty.
\end{align}
Eventually, it holds
 \begin{align}\label{con5}
\frac{d}{dt} u_m \rightarrow \frac{d}{dt} u \quad \text{ weakly in } \quad L^2([0,{\mathcal T}], V_0^{'})+ L^{\frac{2\alpha+2}{2\alpha+1}}([0,{\mathcal T}],L^{\frac{2\alpha+2}{2\alpha+1}}(\Omega) ) \quad \text{as} \quad m \rightarrow +\infty.
\end{align}
Thus, thanks to Aubin-Lions compactness Lemma \cite{Lions,Temam}, we have
\begin{equation}\label{con7}
(u_m,\theta_m) \rightarrow (u,\theta) \quad \text{strongly \:in} \quad L^2([0,{\mathcal T}], H_0) \times   L^2([0,{\mathcal T}], H_1) \quad \text{as} \quad m \rightarrow +\infty.
\end{equation}
Now, let us fix $i$, such that $i<m$. We take the $H_0-$inner product of (\ref{eqpart})$_1$ with $\mathcal W_i$, and the $H_1-$inner product of  (\ref{eqpart})$_2$ with $\mathcal {\tilde W}_i$,  and integrate with respect to time over $[0,t]\subset [0, \mathcal T]$ to obtain
\begin{align*}
&  (u_m(t),\mathcal W_i)_{H_0} - (u_0,\mathcal {W}_i)_{H_0} +\nu \int_0^{t} \,(u_m(s),\mathcal W_i)_{\dot V_0}\,ds  \\ &+\int_0^{t} (B_0(u_m(s),u_m(s)) +a\, |u_m(s)|^{2\alpha}u_m(s)-\theta_m(s)\,{\bf e}_3, \mathcal W_i)_{H_0}\,ds= 0,\\
&    (\theta_m(t), \mathcal {\tilde W}_i)_{H_1}  -(\theta_0, \mathcal {\tilde W}_i)_{H_1} +\kappa \int_0^{t}\,(\theta_m(s), \mathcal {\tilde W}_i)_{V_1}\,ds\\ &+\int_0^{t} (B_1(u_m(s),\theta_m(s))- u_m\cdot {\bf e}_3, \mathcal {\tilde W}_i)_{H_1} \,ds=0.
\end{align*}
In the above we used the facts that $\mathcal{P}_m,\mathcal{\tilde P}_m$ and $\mathbb{P}$ are orthogonal projections in the corresponding $L^2$ spaces, and the fact that $\mathcal{P}_m \mathcal {W}_i = \mathcal {W}_i$ and $\mathcal{\tilde P}_m \, \mathcal {\tilde W}_i=\mathcal {\tilde W}_i$, since $i<m$.

Now, we can pass to the limit in these equalities.  The linear terms and the convective nonlinear terms are handled using   (\ref{con2}-\ref{con7}), and we refer to \cite{Temam,Temam1} for details. For the velocity power term, up to extracting a subsequence, we can show that $v=|u|^{2\alpha}u$ thanks to \eqref{con7} using Aubin-Lions compactness Lemma, \cite{Lions,Temam}. As a result we have shown that for every $i=1,2,\dots$  the limit pair $(u,\theta)$ satisfies
\begin{align*}
&  (u(t),\mathcal W_i)_{H_0} - (u_0,\mathcal {W}_i)_{H_0} +\nu \int_0^{t} \,(u(s),\mathcal W_i)_{\dot V_0}\,ds  \\ &+\int_0^{t} (B_0(u(s),u(s)) +a\, |u(s)|^{2\alpha}u(s)-\theta(s)\,{\bf e}_3, \mathcal W_i)_{H_0}\,ds= 0,\\
&    (\theta(t), \mathcal {\tilde W}_i)_{H_1}  -(\theta_0, \mathcal {\tilde W}_i)_{H_1} +\kappa \int_0^{t}\,(\theta(s), \mathcal {\tilde W}_i)_{V_1}\,ds\\ &+\int_0^{t} (B_1(u(s),\theta(s))- u\cdot {\bf e}_3, \mathcal {\tilde W}_i)_{H_1} \,ds=0.
\end{align*}
Consequently, the pair $(u,\theta)$ is a weak solution to \sys\, (see \cite{Temam,Temam1} for details).

\subsection{ Energy and gradient estimates}\label{weaksec}
In this section we prove the regularity of the solutions. For this purpose, we shall perform formal calculation using system \sys. We remark that these calculation can be justified rigorously by performing them first for the Galerkin approximate system and then pass to the limit as in the previous section. We start by revisiting the estimates we established in the previous section dedicated to  the existence of weak solutions. Specifically, we show the following

\begin{prop}\label{propexistweak}
Let $(u_0,\theta_0)\in H_0\times  H_1,\alpha\geq 0 $ and $a,\nu,\kappa> 0$. If $(u(t),\theta(t))$ is a weak solution of \sys, then
\begin{align*}
& u(t) \in  L^\infty(\R, H_0) \cap L^2_{\rm loc}(\R,V_0) \cap L^{2\alpha+2}_{\rm loc}(\R,L^{2\alpha+2}(\Omega)) \\
&\theta(t) \in L^\infty(\R, H_1) \cap L^2_{\rm loc}(\R,V_1).
\end{align*}
\end{prop}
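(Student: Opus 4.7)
The plan is to upgrade the energy estimates obtained at the Galerkin level, which grow like $e^{2t}$, into uniform-in-time estimates. Testing the velocity equation with $u$ in $H_0$ and the temperature equation with $\theta$ in $H_1$, using $\nabla\cdot u=0$ to kill the transport terms and lower semicontinuity to transfer the inequality \eqref{allinall0} from the Galerkin sequence to the limit, I would begin with
\[
\frac{d}{dt}\bigl(\|u\|_{H_0}^2 + \|\theta\|_{H_1}^2\bigr) + 2\nu\|u\|_{\dot V_0}^2 + 2\kappa\|\theta\|_{V_1}^2 + 2a\|u\|_{2\alpha+2}^{2\alpha+2} \;\le\; 2\|u\|_{H_0}^2 + 2\|\theta\|_{H_1}^2.
\]
The whole point is to absorb both right-hand side terms into the left, obtaining a dissipative ODE of the form $\frac{d}{dt}E + cE \le K$ for $E(t):=\|u(t)\|_{H_0}^2+\|\theta(t)\|_{H_1}^2$ and constants $c,K>0$ depending only on $\nu,\kappa,a,\alpha,\lambda,|\Omega|$.

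For the temperature, since $\theta$ is odd in $z$ it has zero mean, so by Poincaré $\lambda\|\theta\|_{H_1}^2\le\|\theta\|_{V_1}^2$. Splitting $2\kappa\|\theta\|_{V_1}^2$ into two halves and applying Poincaré to one of them produces a dissipation $\kappa\lambda\|\theta\|_{H_1}^2$ on the left, which (up to adjusting the test-function weight if $\kappa\lambda$ happens to be small) dominates the $2\|\theta\|_{H_1}^2$ contribution. The velocity component $u_3$ is also odd in $z$, so Poincaré handles it the same way; but the horizontal components $u_1,u_2$ need not be mean-zero and must be controlled by the Forchheimer damping instead. By H\"older,
\[
\|u\|_2^2 \;\le\; |\Omega|^{\alpha/(\alpha+1)}\|u\|_{2\alpha+2}^2,
\]
and Young's inequality with conjugate exponents $(\alpha+1,(\alpha+1)/\alpha)$ then gives, for any $\epsilon>0$,
\[
\|u\|_{H_0}^2 \;\le\; \epsilon\,\|u\|_{2\alpha+2}^{2\alpha+2} + C_{\epsilon,a,\alpha,|\Omega|}.
\]
Choosing $\epsilon$ small enough that $\epsilon< a$, the term $2\|u\|_{H_0}^2$ on the right is dominated by a fraction of $2a\|u\|_{2\alpha+2}^{2\alpha+2}$ on the left, leaving only a harmless additive constant.

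Combining these two absorptions yields the sought dissipative inequality $\frac{d}{dt}E + cE \le K$. Gronwall then produces $E(t) \le E(0) + K/c$ for all $t\ge 0$, which is exactly the $L^\infty(\mathbb R^+;H_0)\times L^\infty(\mathbb R^+;H_1)$ bound claimed. Finally, integrating the original energy inequality over an arbitrary $[0,T]$ and using the uniform bound on $E$ provides uniform finite-interval control of $\int_0^T\|u\|_{\dot V_0}^2\,ds$, $\int_0^T\|\theta\|_{V_1}^2\,ds$ and $\int_0^T\|u\|_{2\alpha+2}^{2\alpha+2}\,ds$, giving the $L^2_{\rm loc}(\mathbb R^+;V_0)$, $L^2_{\rm loc}(\mathbb R^+;V_1)$ and $L^{2\alpha+2}_{\rm loc}(\mathbb R^+;L^{2\alpha+2}(\Omega))$ conclusions.

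The main obstacle is the velocity absorption: Poincar\'e is unavailable for the horizontal components, so one relies entirely on the nonlinear Forchheimer damping to generate the required coercive control of $\|u\|_{H_0}^2$, and the constants have to be tracked carefully so that the $\nu\|u\|_{\dot V_0}^2$, $\kappa\|\theta\|_{V_1}^2$ and $a\|u\|_{2\alpha+2}^{2\alpha+2}$ terms can simultaneously cover both right-hand side contributions (and also reproduce the explicit constant in \eqref{limsupthm}). A secondary technical point is that these manipulations are formal on the limit equations and should, as the authors note, first be carried out at the Galerkin level and passed to the limit using the weak convergences \eqref{con1}--\eqref{con7}.
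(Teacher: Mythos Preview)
Your proposal is correct and follows essentially the same approach as the paper: test with $(u,\theta)$, use Poincar\'e on $\theta$ (odd in $z$), and—crucially—absorb $\|u\|_{H_0}^2$ via the Forchheimer damping through the H\"older/Young interpolation $\|u\|_{H_0}^2\le \epsilon\|u\|_{2\alpha+2}^{2\alpha+2}+C_\epsilon$, arriving at a dissipative inequality to which Gronwall applies. The paper does exactly this (with explicit choices $\epsilon=\kappa\lambda/4$ in the Young step and a specific $\epsilon$ in the interpolation to produce the constants $\Gamma_0,\Gamma_1$), and then integrates in time for the $L^2_{\rm loc}$ and $L^{2\alpha+2}_{\rm loc}$ bounds.
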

\begin{proof}
Let $(u_0,\theta_0)\in H_0\times H_1$. We take the $H_0$-inner product of \sys$_1$ with $u$, and the $H_1$-inner product  of \sys$_2$ with $\theta$ to obtain, thanks to Cauchy-Schwarz, H\"older, and Young inequalities,
\begin{align}\label{dash1}
\frac12\,\frac{d}{dt}\, \nr u \nr^2_{H_0}+\nu\,\nr u\nr^2_{\dot V_0} +a\,\nr u\nr^{2\alpha+2}_{2\alpha+2}& \leq \epsilon\,\nr \theta\nr^2_{H_1}+\epsilon^{-1}\,\nr u\nr^2_{H_0},
\end{align}
and
\begin{align}\label{thissec}
\frac12\,\frac{d}{dt}\, \nr \theta \nr^2_{H_1}+\kappa\,\nr \theta\nr^2_{V_1} & \leq \epsilon\,\nr \theta\nr^2_{H_1}+\epsilon^{-1}\,\nr u\nr^2_{H_0},
\end{align}
respectively.
Observe that for all $\epsilon>0$, we have
\begin{equation}\label{int1}
\nr u\nr_{H_0}^2 \leq \epsilon\,\nr u\nr_{{2\alpha+2}}^{2\alpha+2} + 2 \epsilon^{-\frac1\alpha}\,L^2,
\end{equation}
%and setting $\epsilon=\frac a2$, we get
%\begin{align}\label{touseforun}
%\frac{d}{dt}\, \nr u \nr^2_{H_0}+2\,\nu\,\nr u\nr^2_{\dot V_0} + a\,\nr u\nr^{2\alpha+2}_{2\alpha+2}&  \leq \frac{1}{a}\, \nr \theta\nr^2_{H_1}+ \left(\frac2a\right)^{1/\alpha}\,L^2.
%\end{align}
%Form \eqref{origlimsuptemam}, we have
%\begin{equation}\label{integthetaforlater}
%\int^t_0\,\nr \theta(\tau)\nr^2_{H_1}\,d\tau \leq 2L^2\,t + \frac4\kappa\,\left(\nr \theta_0\nr_{H_1} +L\right)^2\,(1-e^{-2\kappa t}).
%\end{equation}
%Integrating \eqref{touseforun} with respect to time, we obtain $u(t)\in L^2_{\rm loc}(\R; V_0) \cap L^{2\alpha+2}_{\rm loc}(\R; L^{2\alpha+2}(\Omega))$. In particular, we have  for all $t\geq 0$,
%\begin{align}\label{integgrad}
%&\int_0^t\,\nr u(s)\nr^2_{\dot V_0} \,ds \leq \nr u_0\nr^2_{H_0}+\frac{L^2}{a\,\nu}\left(1+\left[\frac a2\right]^{\frac{\alpha-1}\alpha}\right)\,t + \frac4{a\,\kappa\,\nu}\,\left(\nr \theta_0\nr_{H_1} +L\right)^2\,(1-e^{-2\kappa t}).
%\end{align}
Setting $\epsilon=\frac{2a\kappa\lambda}{a\kappa\lambda +32}$ in \eqref{int1},  and summing-up \eqref{dash1}  and \eqref{thissec} with $\epsilon=\frac{\kappa\lambda}{4}$, we obtain
\begin{align}\label{suminteg}
\nonumber \frac{d}{dt}\, \left( \nr u\nr^2_{H_0}+\nr \theta \nr^2_{H_1} \right)+2\nu\,\nr u\nr^2_{\dot V_0}+\frac\kappa2\,\nr \theta\nr^2_{V_1} +\frac a2 \nr u\nr^2_{H_0}&+ \frac{\kappa\lambda}{2}\,\nr \theta \nr^2_{H_1}+a\,\nr u\nr^{2\alpha+2}_{2\alpha+2}\\& \leq  \underbrace{2\,\left(\frac{2a\kappa\lambda}{a\kappa\lambda +32}\right)^{\frac{\alpha+1}{\alpha}}\,a\,L^2}_{:=\Gamma_0}.
\end{align}
Thanks to Gronwall's inequality, we get
\begin{align}\label{uthetah0bound}
\nonumber\nr u(t)\nr^2_{H_0}+\nr \theta(t) \nr^2_{H_1} &\leq \left(\nr u_0\nr^2_{H_0}+\nr \theta_0 \nr^2_{H_1} \right)\,e^{-\frac{\min{(a,\kappa\lambda})}2\,\,t} \\ &+\underbrace{\left(\frac{2a\kappa\lambda}{a\kappa\lambda +32}\right)^{\frac{\alpha+1}{\alpha}}\,\frac{4a\,L^2}{\min{(a,\kappa\lambda})}}_{:=\Gamma_1}\,(1-e^{-\frac{\min{(a,\kappa\lambda})}2\,\,t}).
\end{align}
Consequently, we have $u(t)\in L^\infty(\R; H_0)$ and $\theta(t) \in L^\infty(\R; H_0)$. In particular, we have
\begin{align}\limsup_{t\to+\infty}\,\left( \nr u(t)\nr^2_{H_0}+\nr \theta(t) \nr^2_{H_1}\right) \leq \Gamma_1.\label{limsupu}
\end{align}
Eventually, integrating \eqref{suminteg} with respect to time, and using \eqref{uthetah0bound} we get
\begin{align}\label{gradul2loc}
\int_0^t\,\nr u(s)\nr^2_{\dot V_0}\,ds \leq \frac{ \nr u_0\nr^2_{H_0}+\nr \theta_0 \nr^2_{H_1} }{2\nu} + \frac{\Gamma_0}{2\nu} t,
\end{align}
and
\begin{align*}
\int_0^t\,\nr \theta(s)\nr^2_{\dot V_1}\,ds \leq \frac{2\left(\nr u_0\nr^2_{H_0}+\nr \theta_0 \nr^2_{H_1} \right)}{\kappa} + \frac{2\Gamma_0}{\kappa}.
\end{align*}
In addition, we have
\begin{align}\label{l2alphaplus2local}
\int_0^t\,\nr u(s)\nr^{2\alpha+2}_{2\alpha+2}\,ds \leq \frac{ \nr u_0\nr^2_{H_0}+\nr \theta_0 \nr^2_{H_1} }{a} +  \frac{\Gamma_0}{a} t.
\end{align}
Therefore, we infer that $ u(t)\in L^2_{\rm loc}(\mathbb R^+;V_0)$, $\theta(t)  L^2_{\rm loc}(\mathbb R^+;V_1)$, and $L^{{2\alpha+2}}_{\rm loc}(\mathbb R^+;L^{{2\alpha+2}}(\Omega))$ .

\end{proof}
\noindent In the sequel, we need the following uniform Gronwall type lemma
\begin{lem}[\cite{Temam1}]\label{unigron}
Let $\varphi,\phi$ and $\psi$ be three non-negative locally integrable functions on $]t_0,+\infty[$ such that $\psi$ is absolutely continuous with $\psi'$ being local integrable on $]t_0,+\infty[$, and which satisfy
\[ \psi' \leq \varphi\,\psi +\phi, \quad \text{for all}\quad t\geq t_0,\]
and
\[\int_t^{t+s}\,\varphi(\tau)\,d\tau \leq a_1,\quad  \int_t^{t+s}\,\phi(\tau)\,d\tau \leq a_2, \quad \text{and}\quad \int_t^{t+s}\,\psi(\tau)\,d\tau \leq a_3,\quad \text{for all}\quad t\geq t_0,\]
where $s, a_1,a_2$ and $a_3$ denote positive constants. Then
\[ \psi(t+s) \leq \left(\frac{a_3}{s}+ a_2\right)\,e^{a_1},\quad \text{for all}\quad t\geq t_0.\]
\end{lem}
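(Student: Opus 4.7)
The plan is to reduce the statement to the classical (differential) Gronwall inequality applied on a sliding subinterval, and then average in the initial time to exploit the integral bound on $\psi$ itself. Fix $t\geq t_0$ and $r\in [t,t+s]$. Since $\psi$ is absolutely continuous with $\psi'$ locally integrable and satisfies $\psi'\leq \varphi\,\psi + \phi$ a.e., multiplying by the integrating factor $\exp\bigl(-\int_r^{\tau}\varphi(\sigma)\,d\sigma\bigr)$ and integrating from $r$ to $t+s$ yields the standard pointwise estimate
\[
\psi(t+s)\leq \psi(r)\,\exp\!\left(\int_r^{t+s}\varphi(\sigma)\,d\sigma\right) + \int_r^{t+s}\phi(\tau)\,\exp\!\left(\int_\tau^{t+s}\varphi(\sigma)\,d\sigma\right)\,d\tau.
\]

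Next I would use the hypotheses on the integrals of $\varphi$ and $\phi$ to simplify. Since $[r,t+s]\subset [t,t+s]$ and $\varphi\geq 0$, both exponentials are bounded by $e^{a_1}$; similarly the inner integral of $\phi$ is bounded by $a_2$. This gives the uniform bound
\[
\psi(t+s)\leq e^{a_1}\,\psi(r) + e^{a_1}\,a_2,\qquad \text{for every } r\in[t,t+s].
\]

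Finally, integrate this inequality in the variable $r$ over $[t,t+s]$. The left-hand side is constant in $r$, producing $s\,\psi(t+s)$, while the integral of $\psi(r)$ on the right is controlled by $a_3$. Dividing by $s$ yields exactly $\psi(t+s)\leq \bigl(a_3/s + a_2\bigr)e^{a_1}$. I expect no serious obstacle here: the only minor technicality is to justify the classical Gronwall step at the level of absolute continuity, which follows from the standard fact that $\psi(\tau)\exp\bigl(-\int_{r}^{\tau}\varphi\bigr)$ is absolutely continuous and has a.e.\ derivative bounded above by $\phi(\tau)\exp\bigl(-\int_{r}^{\tau}\varphi\bigr)$, so the fundamental theorem of calculus applies.
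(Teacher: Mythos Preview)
Your argument is correct and is precisely the standard proof of the uniform Gronwall lemma as given in the reference \cite{Temam1} that the paper cites; the paper itself does not supply a proof and simply quotes the result. The three steps---pointwise Gronwall on $[r,t+s]$, uniform bound via $a_1,a_2$, then averaging in $r$ over $[t,t+s]$ to invoke $a_3$---are exactly Temam's, so there is nothing to add.
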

\noindent Now, we improve these weak solutions by considering more regular initial data for the velocity
\begin{prop}\label{propexiststrong}
Let $(u_0,\theta_0)\in V_0\times  H_1,\alpha> 1 $ and $a,\nu,\kappa> 0$. If $(u(t),\theta(t))$ is a weak solution of \sys, then in addition to the conclusion of Proposition \ref{propexistweak}, it holds  that $u(t) \in  L^\infty(\R, V_0) \cap L^2_{\rm loc}(\R,H^2(\Omega))$.
\end{prop}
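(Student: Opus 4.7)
The plan is to obtain an $H^1$-energy estimate by testing the momentum equation against $\mathcal A_0 u$, then upgrade the resulting differential inequality via the uniform Gronwall Lemma \ref{unigron}. As in Proposition \ref{propexistweak}, the calculations would be carried out rigorously on the Galerkin truncation \eqref{eqpart1} as in Section \ref{locexistence} and passed to the limit; formally, using that $u$ is divergence-free (so that $\mathcal A_0 u=-\Delta u$ and $\mathbb P$ is self-adjoint), one reaches
\[ \tfrac12\tfrac{d}{dt}\nr u\nr^2_{\dot V_0} + \nu\nr\mathcal A_0 u\nr_2^2 + a(|u|^{2\alpha}u,-\Delta u)_{H_0} = -((u\cdot\nabla)u,-\Delta u)_{H_0} + (\theta\,{\bf e}_3,-\Delta u)_{H_0}. \]

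I would handle the damping term by integration by parts: a direct expansion of $\nabla(|u|^{2\alpha}u):\nabla u$ yields the pointwise non-negative integrand $|u|^{2\alpha}|\nabla u|^2 + \tfrac{\alpha}{2}|u|^{2\alpha-2}|\nabla|u|^2|^2$, so $a\int|u|^{2\alpha}|\nabla u|^2\,dx$ is kept on the left. The buoyancy term is dispatched routinely by Cauchy-Schwarz, Young and Poincar\'e as $\tfrac{\nu}{4}\nr\mathcal A_0 u\nr_2^2 + C\nr\theta\nr^2_{H_1}$. The real obstacle is the convective term, which after Young reduces to controlling $\int|u|^2|\nabla u|^2\,dx$:
\[ |((u\cdot\nabla)u,-\Delta u)|\leq \tfrac{\nu}{4}\nr\mathcal A_0 u\nr_2^2 + C\!\int|u|^2|\nabla u|^2\,dx. \]
For the pure 3D Navier-Stokes case $a=0$, this integral is estimated by Gagliardo-Nirenberg as $C\nr u\nr^6_{\dot V_0}$, leaving only local-in-time bounds; it is precisely here that the Forchheimer damping with $\alpha>1$ is crucial. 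Using the pointwise factorization $|u|^2|\nabla u|^2 = (|u|^{2\alpha}|\nabla u|^2)^{1/\alpha}\cdot|\nabla u|^{2(\alpha-1)/\alpha}$ followed by H\"older and Young with the conjugate exponents $(\alpha,\alpha/(\alpha-1))$ (well-defined exactly because $\alpha>1$ strictly) yields, for any $\eta>0$,
\[ \int|u|^2|\nabla u|^2\,dx \leq \eta\!\int|u|^{2\alpha}|\nabla u|^2\,dx + C_\eta\nr u\nr^2_{\dot V_0}, \]
and a small $\eta$ absorbs this into the damping term on the left.

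Collecting the estimates delivers a differential inequality of the form
\[ \tfrac{d}{dt}\nr u\nr^2_{\dot V_0} + \tfrac{\nu}{2}\nr\mathcal A_0 u\nr_2^2 + \tfrac{a}{2}\!\int|u|^{2\alpha}|\nabla u|^2\,dx \leq C\nr u\nr^2_{\dot V_0} + C\nr\theta\nr^2_{H_1}, \]
on which I invoke Lemma \ref{unigron} with $\psi=\nr u\nr^2_{\dot V_0}$, $\varphi\equiv C$ and $\phi=C\nr\theta\nr^2_{H_1}$. All three uniform integrability hypotheses are supplied by Proposition \ref{propexistweak}: $\nr\theta\nr_{H_1}$ is uniformly bounded by \eqref{uthetah0bound}, and $\int_t^{t+s}\nr u\nr^2_{\dot V_0}\,d\tau$ is controlled uniformly in $t$ by repeating the argument that produced \eqref{gradul2loc} starting at time $t$ together with the uniform bound on $\nr u(t)\nr^2_{H_0}+\nr\theta(t)\nr^2_{H_1}$. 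The lemma then yields $\sup_{t\geq s}\nr u(t)\nr^2_{\dot V_0}<\infty$ for any fixed $s>0$, which, combined with a standard Gronwall inequality on $[0,s]$ (where $u_0\in V_0$ supplies finite initial data), gives $u\in L^\infty(\mathbb R^+;V_0)$. Integrating the differential inequality on any $[0,T]$ and inserting the $L^\infty$ bounds just obtained produces $\int_0^T\nr\mathcal A_0 u\nr_2^2\,dt\leq C_T$, and the equivalence $\nr\cdot\nr_{H^2}\simeq\nr\cdot\nr_2+\nr\mathcal A_0\cdot\nr_2$ recalled in the introduction completes $u\in L^2_{\rm loc}(\mathbb R^+;H^2(\Omega))$.
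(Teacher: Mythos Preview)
Your proof is correct and follows essentially the same route as the paper's: test against $-\Delta u$, keep $a\int|u|^{2\alpha}|\nabla u|^2$ on the left via integration by parts, absorb the convective term into this damping contribution through a H\"older/Young splitting with exponents $(\alpha,\alpha/(\alpha-1))$ (which is precisely where $\alpha>1$ enters), and then run the uniform Gronwall Lemma~\ref{unigron} together with a standard Gronwall bound on a short initial interval. The only cosmetic difference is that the paper applies the factorization $|u|\,|\nabla u|^{1/\alpha}\,|\nabla u|^{1-1/\alpha}\,|\Delta u|$ before peeling off $\nr\Delta u\nr_2^2$ (see \eqref{cumbersome}), whereas you first split off $\nr\Delta u\nr_2^2$ and then factor $|u|^2|\nabla u|^2$; both lead to the same differential inequality \eqref{toconwithgron}.
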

\begin{proof}
First, we show that if  $(u_0,\theta_0)\in V_0\times  H_1$ and $\alpha>1$, then $ u(t)\in L^\infty_{\rm loc}(\R, V_0)$.  We take the $H_0$-inner product of \sys$_1$ with $-\Delta u$ to obtain
\begin{align*}
\frac12\,\frac{d}{dt}\,\nr  u\nr^2_{\dot V_0}+\nu\,\nr \Delta u\nr^2_2+\s\,(u\cdot\nabla)\,u\,\cdot\,(-\Delta\,u)\,dx &+a \,\nr |u|^\alpha\,\nabla u\nr^2_2 \leq \frac1\nu\,\nr \theta\nr^2_{H_1} + \frac\nu4\,\nr\Delta u\nr_2^2,
\end{align*}
since
\begin{align*}
-a\int_\Omega \Delta u \cdot |u|^{2\alpha} u\,dx&= a \int_\Omega \,|u|^{2\alpha}\,\,|\nabla u|^2dx + \frac{2a\alpha}{(\alpha+1)^2}\,\int_\Omega\,|\nabla |u|^{\alpha+1}|^2\,dx.
\end{align*}
Now, using Cauchy-Schwarz, H\"older and Young inequalities and assuming $\alpha>1$, we can write for all $\epsilon,\epsilon_0>0$
\begin{align}\label{cumbersome}
\left|\s\,(u\cdot \nabla)u \,\cdot\,(-\Delta\,u)\,dx \right| &\leq \s\,|u|\,|\nabla\,u|^\frac1\alpha\,|\nabla\,u|^{1-\frac1\alpha}\,|\Delta\,u|\,dx \nonumber\\
&\leq \nr |u|\,|\nabla\,u|^\frac1\alpha\nr_{{2\alpha}}\,\nr |\nabla\,u|^{1-\frac1\alpha}\nr_{\frac{2\alpha}{\alpha-1}}\,\nr\Delta\,u\nr_{2}\nonumber\\
&\leq \frac1{4\epsilon_0}\, \nr|u|^{\alpha}\,|\nabla\,u|\nr^{\frac2\alpha}_{2}\,\nr u\nr^{2(1-\frac1\alpha)}_{\dot V_0}  +\epsilon_0\nr\Delta\,u\nr^2_{2}\nonumber\\
&\leq \frac\epsilon{4\epsilon_0\,}\, \nr|u|^{\alpha}\,|\nabla u|\nr^{2}_{2} + \frac{\epsilon^{\frac1{1-\alpha}}}{4\epsilon_0} \,\nr u\nr^{2}_{\dot V_0}  +\epsilon_0\,\nr\Delta\,u\nr^2_{2}.
\end{align}
Optimizing in the $\epsilon'$s, we obtain
\begin{align}\label{toconwithgron}
\frac{d}{dt}\,\nr  u\nr^2_{\dot V_0}+\nu\,\nr \Delta u\nr^2_2 +a \,\nr |u|^\alpha\,\nabla u\nr^2_2 \leq \frac{1}\nu\,\nr \theta(t)\nr^2_{H_1} + \underbrace{\left(\frac{a\nu^\alpha}{2^{2-\alpha}}\right)^{1/1-\alpha}}_{:=\Gamma_2}\,\nr u\nr^2_{\dot V_0}.
\end{align}
Integrating this inequality with respect to time and using the fact that $\theta \in L^\infty(\R, H_1)$ and $ u(t)\in L^2_{\rm loc}(\mathbb R^+;V_0)$, we obtain that $\Delta u \in L^2_{\rm loc}(\mathbb R^+;V_0)$, hence $u\in L^2_{\rm loc}(\R, H^2(\Omega))$. Also, we have
\begin{equation}\label{powterminteg}
 |u|^\alpha\,\nabla u  \in  L^2_{\rm loc}(\R,L^2(\Omega)).
\end{equation}
Furthermore,  using the fact that  $\theta \in L^\infty(\R,H_1)$ then by means of the Gronwall's inequality we obtain that $u(t)\in L^\infty_{\rm loc}(\R, V_0)$.
\vskip6pt
\noindent Now, we show that actually $u(t)\in L^\infty(\R, V_0)$. We proceed in two steps, first for all $t \in[0,1]$ and then for all $t\geq 0$.
\vskip6pt
\noindent On the one hand, let $t \in[0,1]$, integrating \eqref{toconwithgron}, using \eqref{uthetah0bound} along with \eqref{gradul2loc}, we obtain
\begin{align*}
\nr  u(t)\nr^2_{\dot V_0} &\leq \nr  u_0\nr^2_{\dot V_0} + \frac{\nr u_0\nr^2_{H_0}+\nr \theta_0 \nr^2_{H_1}}{\nu}  +\frac{\Gamma_1}{\nu} + \frac{\Gamma_2}{2\nu}\,\left({ \nr u_0\nr^2_{H_0}+\nr \theta_0 \nr^2_{H_1} } + {\Gamma_0}\right).
\end{align*}
On the other hand, let $t\ge 0$ and we set
\[\psi=\nr u(t)\nr^2_{\dot V_0},\quad  \varphi =  \Gamma_2, \quad \text{and}\quad \phi=\frac{1}\nu\,\nr \theta(t)\nr^2_{H_1}. \]
%Using \eqref{dash1} and \eqref{int1}, we obtain
%\begin{align}\label{touseforun}
%\frac{d}{dt}\, \nr u \nr^2_{H_0}+2\,\nu\,\nr u\nr^2_{\dot V_0} + a\,\nr u\nr^{2\alpha+2}_{2\alpha+2}&  \leq \frac{1}{a}\, \nr \theta\nr^2_{H_1}+ \left(\frac2a\right)^{1/\alpha}\,L^2.
%\end{align}
Now, integrating  \eqref{suminteg} over $[t,t+1]$  we obtain  for all $t\geq 0$
\[
\int_t^{t+1}\,\nr u(\tau)\nr^2_{\dot V_0}\,d\tau \leq  \frac{1}{2\nu} \left({ \nr u_0\nr^2_{H_0}+\nr \theta_0 \nr^2_{H_1} } + {\Gamma_0+\Gamma_1}\right),
\]
and
\[
\int_t^{t+1}\,\nr \theta(\tau)\nr^2_{H_1}\,d\tau \leq  \frac{2}{\kappa\lambda} \left({ \nr u_0\nr^2_{H_0}+\nr \theta_0 \nr^2_{H_1} } + {\Gamma_0+\Gamma_1}\right).
\]
Eventually, thanks to Lemma \ref{unigron} with s=1, we obtain for all $t\geq 1$
\begin{align*}
\nonumber\nr u(t)\nr_{\dot V_0}^2 &\leq \frac{\kappa\lambda+4}{2\kappa\lambda\nu} \left({ \nr u_0\nr^2_{H_0}+\nr \theta_0 \nr^2_{H_1} } + {\Gamma_0+\Gamma_1}\right)\,\exp{(\Gamma_2)}.
\end{align*}
All in all, we get $u(t) \in  L^\infty(\R, V_0) $.
\end{proof}
\noindent Now, we show the existence of absorbing ball in $V_0$ for the velocity.  For this purpose, we let $t\ge1$  and $s\in [t-1,t]$, we integrate \eqref{toconwithgron} over $[s,t]$ and get
\begin{align}\label{startofit}
\nr  u(t)\nr^2_{\dot V_0} \leq  \nr  u(s)\nr^2_{\dot V_0} + \frac{1}\nu\,\int_{t-1}^t\,\nr \theta(\tau)\nr^2_{H_1}\,d\tau + \Gamma_2\,\int_{t-1}^t\,\nr u(\tau)\nr^2_{\dot V_0}\,d\tau.
\end{align}
Now, on the one hand, integrating \eqref{uthetah0bound}, we obtain
\begin{align}\label{temp2fortofit0}
\frac{1}\nu\,\int_{t-1}^t\,\nr \theta(\tau)\nr^2_{H_1}\,d\tau &\leq \frac{2\left(\nr u_0\nr^2_{H_0}+\nr \theta_0 \nr^2_{H_1}\right)}{\nu\,\min{(a,\kappa\lambda)}}\,\left(e^{\frac{\min{(a,\kappa\lambda})}2}-1\right)\,e^{-\frac{\min{(a,\kappa\lambda})}2\,\,t} +\frac{\Gamma_1}{\nu}.
\end{align}
On the other hand, going back to \eqref{dash1}, and using \eqref{int1}, we get
\begin{align*}
\frac{d}{dt}\, \nr u \nr^2_{H_0}+2\,\nu\,\nr u\nr^2_{\dot V_0} + a\nr u\nr^{2}_{H_0}& \leq a\,\nr \theta\nr^2_{H_1} +  4aL^2 .
\end{align*}
Integrating this inequality over $[t-1,t]$, and use \eqref{temp2fortofit0} to obtain
\begin{align}\label{temp2fortofit}
\nonumber&\int_{t-1}^t\,\nr u(\tau)\nr^2_{\dot V_0}\,d\tau \leq \frac1{2\nu}\,\nr u(t-1)\nr^2_{H_0}+ \frac a{2\nu}\,\int_{t-1}^t\,\nr \theta(\tau)\nr^2_{H_1}\,d\tau +\frac{2aL^2}{\nu} \\
&\hskip30pt\leq \frac1{2\nu}\,\nr u(t-1)\nr^2_{H_0} +\frac{a\left(\nr u_0\nr^2_{H_0}+\nr \theta_0 \nr^2_{H_1}\right)}{\nu\,\min{(a,\kappa\lambda)}}\,\left(e^{\frac{\min{(a,\kappa\lambda})}2}-1\right)\,e^{-\frac{\min{(a,\kappa\lambda})}2\,\,t} +\frac{a\,\Gamma_1 +4 aL^2}{2\nu}.
%\nonumber&\leq   \frac1{2\nu}\,\nr u(t-1)\nr^2_{H_0}+\frac{1+a^{-\frac1\alpha}L^2}\nu +\frac2{\nu \kappa}\,\left (\nr\theta_0\nr_{H_1}+L\right)^2\,(e^\kappa-1) \, e^{-\kappa t}\\
%\nonumber&\leq  \left\{  \frac{\nr u_0\nr^2_{H_0}+\nr \theta_0 \nr^2_{H_1}}{2\nu} +\frac2{\nu \kappa}\,\left (\nr\theta_0\nr_{H_1}+L\right)^2\,(e^\kappa-1)  \right\} \,e^{-\ {\lambda\kappa}\,t} \\ &+\left(\frac{\lambda^2\kappa^2 +4}{\lambda\kappa}\right)^{\frac{\alpha+1}{\alpha}} \frac{2^{-\frac{\alpha+1}{\alpha}}a^{-\frac1\alpha}L^2}{\lambda\kappa\nu}\,(1-e^{-\frac t {\lambda\kappa}})+ \frac{1+a^{-\frac1\alpha}L^2}\nu.
\end{align}
Substituting \eqref{temp2fortofit0} and \eqref{temp2fortofit} in \eqref{startofit}, we get  for $s\in[t-1,t]$
\begin{align*}
\nonumber \nr  u(t)\nr^2_{\dot V_0} \leq \nr  u(s)\nr^2_{\dot V_0}&+\frac{\Gamma_2}{2\nu}\,\nr u(t-1)\nr_{H_0}^2 +\frac{2+a\Gamma_2}{\nu\min{(a,\kappa\lambda)}}\left(\nr u_0\nr^2_{H_0}+\nr \theta_0 \nr^2_{H_1}\right)\,\left(e^{\frac{\min{(a,\kappa\lambda})}2}-1\right)\,e^{-\frac{\min{(a,\kappa\lambda})}2\,\,t}\\&+\frac{\Gamma_2\left(a\Gamma_1 + 4aL^2\right)+2\Gamma_1}{2\nu}.
\end{align*}
Thanks to \eqref{uthetah0bound}, we have, for $s\in[t-1,t]$
\begin{align*}
\nonumber \nr  u(t)\nr^2_{\dot V_0} \leq \nr  u(s)\nr^2_{\dot V_0}&+\frac{2+a\Gamma_2}{\nu\min{(a,\kappa\lambda)}}\left(\nr u_0\nr^2_{H_0}+\nr \theta_0 \nr^2_{H_1}\right)\,\left(e^{\frac{\min{(a,\kappa\lambda})}2}-1\right)\,e^{-\frac{\min{(a,\kappa\lambda})}2\,\,t}\\
&+\frac{\Gamma_2\left[(a+1)\Gamma_1 + 4aL^2\right]+2\Gamma_1}{2\nu}.
\end{align*}
Integrating this inequality with respect to $s$ over $[t-1,t]$, we get
\begin{align*}
\nr  u(t)\nr^2_{\dot V_0} \leq \int^t_{t-1}\nr  u(s)\nr^2_{\dot V_0}\,ds&+\frac{2+a\Gamma_2}{\nu\min{(a,\kappa\lambda)}}\left(\nr u_0\nr^2_{H_0}+\nr \theta_0 \nr^2_{H_1}\right)\,\left(e^{\frac{\min{(a,\kappa\lambda})}2}-1\right)\,e^{-\frac{\min{(a,\kappa\lambda})}2\,\,t}\\
&+\frac{\Gamma_2\left[(a+1)\Gamma_1 + 4aL^2\right]+2\Gamma_1}{2\nu}.
\end{align*}
Now, using again \eqref{temp2fortofit}, we get
\begin{align*}
\nr  u(t)\nr^2_{\dot V_0} \leq \frac1{2\nu}\,\nr u(t-1)\nr^2_{H_0} &+\frac{2+a(\Gamma_2+1)}{\nu\min{(a,\kappa\lambda)}}\left(\nr u_0\nr^2_{H_0}+\nr \theta_0 \nr^2_{H_1}\right)\,\left(e^{\frac{\min{(a,\kappa\lambda})}2}-1\right)\,e^{-\frac{\min{(a,\kappa\lambda})}2\,\,t}\\
&+\frac{\Gamma_2\left[(a+1)\Gamma_1 + 4aL^2\right]+(2+a)\Gamma_1 + 4aL^2}{2\nu}.
\end{align*}
Eventually, using \eqref{uthetah0bound}, the latter inequality gives
\begin{align*}
\nr  u(t)\nr^2_{\dot V_0} \leq &+\frac{2+a(\Gamma_2+1)}{\nu\min{(a,\kappa\lambda)}}\left(\nr u_0\nr^2_{H_0}+\nr \theta_0 \nr^2_{H_1}\right)\,\left(e^{\frac{\min{(a,\kappa\lambda})}2}-1\right)\,e^{-\frac{\min{(a,\kappa\lambda})}2\,\,t}\\
&+\frac{\Gamma_2\left[(a+1)\Gamma_1 + 4aL^2\right]+(3+a)\Gamma_1 + 4aL^2}{2\nu}.
\end{align*}
Threfore, one has
\begin{align}\label{limsupgradthmstraong1}
\limsup_{t\to+\infty} \,\nr  u(t)\nr^2_{\dot V_0} &\leq \frac{\Gamma_2\left[(a+1)\Gamma_1 + 4aL^2\right]+(3+a)\Gamma_1 + 4aL^2}{2\nu}.
\end{align}
\vskip6pt
\noindent Next, we have
\begin{prop}\label{proptimederiv}
Let $(u_0,\theta_0)\in V_0\cap L^{2\alpha+2}(\Omega) \times  H_1,\alpha> 1 $ and $a,\nu,\kappa> 0$. If $(u(t),\theta(t))$ is a weak solution of \sys, then in addition to the conclusion of Proposition \ref{propexiststrong}, it holds  that $u\in L^{\infty}_{\rm loc}(\mathbb R^+;L^{{2\alpha+2}}(\Omega))$ and $\partial_t u\in L^2_{\rm loc}(\mathbb R^+; L^2(\Omega))$.
\end{prop}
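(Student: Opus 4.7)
The strategy is to test the velocity equation against $\partial_t u$ in order to simultaneously produce time derivatives of $\tfrac{\nu}{2}\nr\nabla u\nr_2^2$ and of $\tfrac{a}{2\alpha+2}\nr u\nr_{2\alpha+2}^{2\alpha+2}$, and to absorb the convection and buoyancy terms into a small fraction of $\nr\partial_t u\nr_2^2$ together with quantities already known to be locally integrable from Propositions \ref{propexistweak}--\ref{propexiststrong}. As usual, since $\partial_t u$ of a weak solution is only in a dual space a priori, the computation is performed first on the Galerkin system \eqref{eqpart}, yielding uniform-in-$m$ bounds that survive the weak limit. Note that $u_m(0)=\mathcal P_m u_0$ is also bounded in $L^{2\alpha+2}(\Omega)$ because $\mathcal P_m$ is a spectral projection on a basis consisting of eigenfunctions of $\mathcal A_0$, which is bounded on $L^{2\alpha+2}$ for smooth enough basis functions; the necessary control of $\nr u_m(0)\nr_{2\alpha+2}$ can alternatively be obtained by using a mollification of $u_0$ before projecting.

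Taking the $H_0$-inner product of \eqref{eqpart1} with $\partial_t u_m$ produces the identity
\begin{align*}
\nr \partial_t u_m\nr_2^2 + \frac{\nu}{2}\,\frac{d}{dt}\nr u_m\nr_{\dot V_0}^2 + \frac{a}{2\alpha+2}\,\frac{d}{dt}\nr u_m\nr_{2\alpha+2}^{2\alpha+2} = -\bigl((u_m\cdot\nabla)u_m,\partial_t u_m\bigr)_{H_0} + (\theta_m\,{\bf e}_3,\partial_t u_m)_{H_0},
\end{align*}
where the pressure and the projectors drop out by orthogonality. The buoyancy term is handled crudely via Young's inequality by
$|(\theta_m{\bf e}_3,\partial_t u_m)_{H_0}|\leq \tfrac14 \nr\partial_t u_m\nr_2^2+\nr\theta_m\nr_{H_1}^2$,
and the right hand side is in $L^\infty_{\rm loc}(\mathbb R^+)$ by \eqref{uthetah0bound}.

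The main obstacle is the convective term. The idea is to use Agmon's inequality in 3D together with the $L^\infty_{\rm loc}(\mathbb R^+;V_0)\cap L^2_{\rm loc}(\mathbb R^+;H^2)$ control already established in Proposition \ref{propexiststrong}. Writing $\nr u_m\nr_\infty\leq C\,\nr u_m\nr_{H^1}^{1/2}\nr u_m\nr_{H^2}^{1/2}$, we estimate
\begin{align*}
\bigl|((u_m\cdot\nabla)u_m,\partial_t u_m)_{H_0}\bigr| \leq \nr u_m\nr_\infty \nr \nabla u_m\nr_2 \nr \partial_t u_m\nr_2 \leq \frac14\nr\partial_t u_m\nr_2^2 + C\,\nr u_m\nr_{H^1}^{3}\nr u_m\nr_{H^2},
\end{align*}
and since $\nr u_m\nr_{H^1}$ is uniformly bounded in $m$ and $t$, and $\nr u_m\nr_{H^2}\in L^2_{\rm loc}$ uniformly in $m$, the last term is in $L^1_{\rm loc}(\mathbb R^+)$ uniformly in $m$ after Cauchy--Schwarz. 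Plugging the two estimates back and absorbing $\tfrac12\nr\partial_t u_m\nr_2^2$, we get, for all $t\geq 0$,
\begin{align*}
\frac12\int_0^t \nr\partial_t u_m(s)\nr_2^2\,ds + \frac{\nu}{2}\nr u_m(t)\nr_{\dot V_0}^2 + \frac{a}{2\alpha+2}\nr u_m(t)\nr_{2\alpha+2}^{2\alpha+2}\leq \mathcal K(t),
\end{align*}
with $\mathcal K(t)$ a locally bounded function of $t$ depending on $\nr u_0\nr_{V_0}$, $\nr u_0\nr_{2\alpha+2}$, $\nr\theta_0\nr_{H_1}$ and the parameters, but independent of $m$.

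Finally, passing to the limit $m\to\infty$ using the same weak/weak-$*$ convergences \eqref{con1}--\eqref{con5} combined with lower semicontinuity of norms yields $u\in L^\infty_{\rm loc}(\mathbb R^+;L^{2\alpha+2}(\Omega))$ and $\partial_t u\in L^2_{\rm loc}(\mathbb R^+;L^2(\Omega))$, which is the desired conclusion. The two places that deserve the most care are (i) the control of $\nr\mathcal P_m u_0\nr_{2\alpha+2}$ at the initial time, which is why we need $u_0\in L^{2\alpha+2}(\Omega)$ in the hypothesis, and (ii) the convective estimate above, where the strict inequality $\alpha>1$ enters indirectly through the use of Proposition \ref{propexiststrong} to place $u_m$ in $L^\infty_t V_0\cap L^2_t H^2$.
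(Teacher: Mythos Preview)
Your proof is correct and follows the same overall strategy as the paper---test the velocity equation against $\partial_t u$ so that the viscous and damping terms become exact time derivatives of $\tfrac{\nu}{2}\nr\nabla u\nr_2^2$ and $\tfrac{a}{2\alpha+2}\nr u\nr_{2\alpha+2}^{2\alpha+2}$, then integrate in time. The buoyancy estimate and the final passage to the limit are essentially identical.

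The genuine difference lies in the treatment of the convective term. The paper does \emph{not} use Agmon's inequality; instead it recycles the H\"older splitting of \eqref{cumbersome} (with $\Delta u$ replaced by $\partial_t u$), obtaining
\[
\Big|\int_\Omega (u\cdot\nabla)u\cdot\partial_t u\,dx\Big|\le \nr|u|^\alpha\,|\nabla u|\nr_2^2+\nr u\nr_{\dot V_0}^2+\tfrac14\nr\partial_t u\nr_2^2,
\]
and then invokes the already established local integrability \eqref{powterminteg} of $\nr|u|^\alpha\nabla u\nr_2^2$ together with \eqref{gradul2loc}. In contrast, you bound the convection via $\nr u\nr_\infty\nr\nabla u\nr_2\nr\partial_t u\nr_2$ and Agmon, which consumes instead the $L^\infty_tV_0\cap L^2_tH^2$ control from Proposition~\ref{propexiststrong}. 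Both routes are legitimate: the paper's estimate exploits the damping structure directly and uses $\alpha>1$ explicitly in the H\"older exponents, whereas your argument is more classical and uses $\alpha>1$ only through the prior availability of the $H^2$ bound. A minor remark: your phrase ``after Cauchy--Schwarz'' is unnecessary, since $\nr u\nr_{H^1}^3\nr u\nr_{H^2}$ is already in $L^2_{\rm loc}\subset L^1_{\rm loc}$ by $L^\infty_t\times L^2_t$.
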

\begin{proof}
We multiply\sys$_1$ by $\partial_tu$ and integrate over $\Omega$ to get thanks to the Cauchy-Schwarz and Young inequalities
\begin{align*}
\nr\partial_t u\nr_2^2 + \s\,(u\cdot \nabla)u\cdot \partial_t u\,dx + \frac\nu2\,\frac{d}{dt}\nr \nabla u\nr_2^2 &+\frac{a}{2\alpha+2}\,\frac{d}{dt}\,\nr u\nr_{2\alpha+2}^{2\alpha+2}  \leq \nr \theta\nr_{H_1}^2 +\frac14\,\nr\partial_tu\nr_2^2.
\end{align*}
Now, proceeding as in \eqref{cumbersome}, we have
\begin{align}\label{toseeitlater}
\s\,(u\cdot \nabla)u\cdot \partial_t u\,dx &\leq \nr|u|^{\alpha}\,|\nabla u|\nr^{2}_{2} + \,\nr u\nr^{2}_{\dot V_0}  +\frac14\,\nr\partial_t\,u\nr^2_{2}.
\end{align}
Thus, we have
\begin{align*}
\int_0^t\nr\partial_s u(s)\nr_2^2\,ds\,  + \frac{a}{2\alpha+2}\,\nr u (t)\nr_{2\alpha+2}^{2\alpha+2}& \leq \frac\nu2\,\nr u_0\nr^2_{\dot V_0}+\frac{a}{\alpha+1}\,\nr u_0\nr_{2\alpha+2}^{2\alpha+2} \\
&+ 2\,\int_0^t\,\left(\nr \theta(s)\nr_{H_1}^2+\nr|u(s)|^{\alpha}\,|\nabla u(s)|\nr^{2}_{2} + \,\nr u(s)\nr^{2}_{\dot V_0}\right)\,ds.
\end{align*}
Using \eqref{uthetah0bound},\eqref{gradul2loc}, \eqref{powterminteg}, and Proposition \ref{propexiststrong}, we obtain for all $(u_0,\theta_0)\in { V_0}\cap L^{2\alpha+2}(\Omega) \times H_1$
\begin{equation*} u\in L^{\infty}_{\rm loc}(\mathbb R^+;L^{{2\alpha+2}}(\Omega)),\quad \text{and}\quad \partial _t u\in L^2_{\rm loc}(\mathbb R^+; L^2(\Omega)).\end{equation*}
\end{proof}
\begin{remark}\label{tempinfinity}
Let $(u_0,\theta_0)\in H_0 \times  L^\infty(\Omega), \alpha> 1 $ and $a,\nu,\kappa> 0$. If $(u(t),\theta(t))$ is a weak solution of \sys, then $\theta(t) \in L^{\infty}(\Omega)$. This property can be readily shown using a variant of the maximum principle as in \cite{Temam1} applied to the original system $(\mathcal {S}_o-\mathcal{BC}_o)$ first to obtain that $T(t)\in L^\infty (\Omega)$ if $T(t=0) \in L^\infty (\Omega)$. Next, one uses the relation \eqref{change} together with the fact that $-1\leq z\leq 1$ to obtain the desired result.
\end{remark}
\subsection{Continuous dependence on the initial data and uniqueness of solutions}\label{uniquenesssection}
In this section, we prove the continuous dependence of the weak solutions on the initial data with respect to the  $H_0 \times H^{-1}(\Omega)$ topology, in particular their uniqueness. For this purpose, let $(u,\theta)$ and $(v,\eta)$ be two weak solutions of system \sys, and let $w=u-v$ and $\xi=\theta-\eta$. It is rather clear that $(w,\xi)$ enjoys
\begin{align*}
\mathcal S_d:\quad \left\lbrace\begin{array}{ll}
&\partial_t\,w+\nu\,\mathcal A_0\,w + (v\cdot\nabla)w+(w\cdot\nabla)u +a\, |u|^{2\alpha}u-a\, |v|^{2\alpha}v +\nabla \pi=  \xi\,{\bf e}_3,\\ \\
&\partial_t\,\xi+\kappa\,\mathcal A_1\,\xi +(v\cdot\nabla)\xi+(w\cdot\nabla)\theta = w\cdot {\bf e}_3,\\ \\
& \nabla\cdot w=0,\;w_{\vert_{t=0}}=w_0=u_0-v_0,\; \xi_{\vert{{t=0}}}=\xi_0=\theta_0-\eta_0,\\
\end{array}
\right.
\end{align*}
with the associated periodic BC obtained from $\mathcal {BC}_e$ satisfied by $(u,\theta)$ and $(v,\eta)$. In the sequel, we will need the following  strong monotonicity form (see, {\it e.g.}, \cite{Barret}): There exists a positive constant $\delta(\alpha)$ such that
\begin{equation}\label{positivity}
\delta\,|u-v|^{2}\,\left(|u|+|v|\right)^{2\alpha}\leq \left(|u|^{2\alpha}u-|v|^{2\alpha}v\right)\cdot (u-v) .
\end{equation}
In order to show the uniqueness, first we take the $H_0$-inner product of $(\mathcal {S}-\mathcal{BC}_e)_1$ with $w$ and  integrate over $\Omega$ to obtain
\begin{align}\label{uniquenesspart1}
\frac12\,\frac{d}{dt}\, \nr w \nr^2_{H_0} &+\nu\,\nr  w\nr^2_{\dot V_0} + \int_\Omega\,(w\cdot\nabla)u\cdot w \,dx+ a\,\s |u|^{2\alpha}u\,\cdot w\,dx-a\,\s |v|^{2\alpha}v\cdot  w\,dx\\\nonumber &= \s \xi\,(w\cdot {\bf e_3})\,dx.
\end{align}
Next, let $\psi_1$ and $\psi_2$ be the unique periodic functions solutions of the elliptic equations $\theta=\Delta \psi_1$ and $\eta=\Delta \psi_2$ satisfying $\int _\Omega \psi_1dx=\int _\Omega \psi_1dx=0$. Since $\theta$ and $\eta$  are odd functions with respect of the $z-$variable then $\psi_1$ and $\psi_2$ enjoy the same property. Now, let $\psi=\psi_1-\psi_2$, therefore $\psi$ satisfies the following equation
\[\partial_t\,\Delta \psi+\kappa\,\mathcal A_1\,\Delta\Psi +(v\cdot\nabla)\Delta \psi+(w\cdot\nabla)\Delta\psi_1 = w\cdot {\bf e}_3\]
Taking the duality action of the latter equation with $\psi$ gives
\begin{align}\label{uniquenesspart2}
\frac12\,\frac{d}{dt}\, \nr \nabla \psi \nr^2_{2} &+\kappa\,\nr \Delta\psi\nr^2_{2} + \int_\Omega\,(v\cdot\nabla)\Delta \psi\,\psi\,dx  +\int_\Omega\,(w\cdot\nabla)\Delta \Psi_1\,\psi\,dx = \s \psi\,(w\cdot {\bf e_3})\,dx.
\end{align}
Now, using Cauchy-Schwarz and Young inequalities, we can write
\begin{align}\label{threediffbasic}
 \s \xi\,(w\cdot {\bf e_3})\,dx=  \s \Delta \psi\,(w\cdot {\bf e_3})\,dx &\leq \nr \Delta \psi\nr_{2}\,\nr w\nr_{H_0}  \leq \frac\kappa3\nr \Delta \psi\nr^2_{H_1} + \frac 3\kappa\,\nr w\nr^2_{H_0}.
 \end{align}
 Next, since  $\nabla \cdot w=0$ then by  integrating by parts and using the  generalized H\"older inequality, we have  for all $\alpha>1$
 \begin{align*}
|\int_\Omega\,(w\cdot\nabla)u\cdot  w \,dx |= |-\int_\Omega\,(w\cdot\nabla)w\cdot  u \,dx|
&  \leq \s |w|\,|u|\,|\nabla w|\,dx|= \s\,|u|\,|w|^\frac1\alpha\,|w|^{1-\frac1\alpha}\,|\nabla\,w|\,dx \nonumber\\
&\leq \nr |u|\,|w|^\frac1\alpha\nr_{{2\alpha}}\,\nr |w|^{1-\frac1\alpha}|\nr_{\frac{2\alpha}{\alpha-1}}\,\nr w\nr_{\dot V_0}\nonumber\\
&\leq \frac2{\nu}\, \nr|u|^{\alpha}\,|w|\nr^{\frac2\alpha}_{\0}\,\nr w\nr^{2(1-\frac1\alpha)}_{H_0}  +\frac\nu2\,\nr w\nr^2_{\dot V_0}\nonumber\\
&\leq a\delta\, \nr|u|^{\alpha}\,w\nr^{2}_{2} +(a\delta\nu^\alpha)^{\frac1{1-\alpha}}\,\nr w\nr^{2}_{H_0} +\frac\nu2\,\nr w\nr^2_{\dot V_0}.
\end{align*}
Similarly, since  $\nabla \cdot w=0$ once again we integrate by parts and use the H\"older and Gagliardo-Nirenberg inequalities we deduce
\begin{align*}
|\int_\Omega\,(w\cdot\nabla)\Delta \psi_1\,\psi\,dx| &=|-\int_\Omega\,(w\cdot\nabla)\psi\,\Delta\psi_1\,dx| \\
\nonumber &\leq \nr w\nr_6\,\nr \nabla \psi \nr_3\,\nr\Delta\psi_1\nr_{2}\, \\
 \nonumber& \leq  \nr w\nr_6\,\nr \Delta \psi \nr_2^{\frac12}\,\nr \nabla \psi \nr_2^{\frac12}\,\nr\theta\nr_{H_1} \\
 \nonumber & \leq \frac\nu2\nr w\nr^2_6 + \frac2\nu\nr \Delta \psi \nr_2\,\nr \nabla \psi \nr_2\,\nr\theta\nr^2_{H_1}  \\
  \nonumber & \leq \frac\nu2\nr \nabla w\nr^2_2 + \frac\kappa3 \nr\Delta \psi \nr^2_2 + \frac{12}{\kappa\nu^2}\nr \nabla \psi \nr^2_2\,\nr\theta\nr^4_{H_1} \\
   \nonumber & \leq \frac\nu2\nr \nabla w\nr^2_2 + \frac\kappa3 \nr\Delta \psi \nr^2_2 + \frac{c}{\kappa\nu^2}\nr \nabla \psi \nr^2_2,
\end{align*}
where $c$ is a constant depending on $\nr u_0\nr_{H_0}$ and $\nr\theta_0\nr_{H_1}$ (see \eqref{uthetah0bound}).
Also, we have the following
\begin{align*}
\int_\Omega\,(v\cdot\nabla)\Delta \psi\,\psi\,dx &=-\int_\Omega\,(v\cdot\nabla)\psi\,\Delta\psi\,dx \\
\nonumber &\leq \nr v\nr_6\,\nr \nabla \psi \nr_3\,\nr\Delta\psi\nr_{2}\, \\
&\leq \nr v\nr_{\dot V_0}\,\,\nr \Delta \psi \nr_2^{\frac32}\,\nr \nabla \psi \nr_2^{\frac12}\, \\
&\leq \frac\kappa3\nr \Delta \psi \nr_2^{2} + \frac3\kappa \nr v\nr^2_{\dot V_0}\,\nr \nabla \psi \nr_2^{2}
\end{align*}
Thanks to \eqref{positivity}, we have
\begin{align*}
\delta\,\nr |u|^\alpha\,w\nr^2_2\leq \delta\,\nr (|u|+|v|)^\alpha\,w\nr^2_2\leq \s \left(|u|^{2\alpha}u-|v|^{2\alpha}v\right)\,\cdot w\,dx.
\end{align*}
Summing-up \eqref{uniquenesspart1} and \eqref{uniquenesspart2} and collecting the estimates above, there exist $c_1>0$ depending only on $\alpha,\nu$ and $\kappa$ and $c_2>0$ depending on  $\alpha,\nu, \kappa$ and the $L^2$ norm of the initial data such that
\begin{align*}
\frac{d}{dt}\, \left\{\nr w \nr^2_{H_0}+ \nr \xi \nr^2_{H^{-1}}\right)  \leq   \max{\left\{c_1+c_2+ \nr v\nr^2_{\dot V_0} \right\}}\,\left(\nr w\nr^2_{H_0}+ \nr \xi\nr^2_{H^{-1}}\right).
\end{align*}
Eventually, Gronwall's Lemma combined to the fact  that $v(t)\in L^2_{\rm loc}(\mathbb R^+;V_0)$ (see \eqref{gradul2loc}), shows the continuous dependence of the solutions on the initial data in $L^\infty([0,\mathcal T],H_0 \times H^{-1}(\Omega)$, in particular their uniqueness.
\begin{remark}\label{strongtheta}
Let us mention that Theorem \ref{thmstrong} can be easily extended to  the case where the initial data $(u_0,\theta_0) \in V_0\times V_1$ where it can be shown that the solution satisfies
\begin{align*}
&u(x,t) \in C_b^0(\mathbb R^+;V_0)\cap L^2_{\rm loc}(\mathbb R^+; V_0\cap H^2(\Omega)) \cap L^{{2\alpha+2}}_{\rm loc}(\mathbb R^+;L^{{2\alpha+2}}(\Omega)),\\&{\rm and}\\
& \theta(x,t) \in C_b^0(\mathbb R^+;V_1)\cap L^2_{\rm loc}(\mathbb R^+; V_1\cap H^2(\Omega)).
\end{align*}
The proof of the uniqueness given in section \ref{uniquenesssection} remains unchanged. Indeed, the necessary estimate is obtained by taking the the $V_1$-inner product of \sys$_2$ with $-\Delta \theta$ to obtain
\begin{align*}
\frac12\,\frac{d}{dt}\,\nr  \theta \nr^2_{V_1}+\kappa\,\nr \Delta \theta\nr^2_2+\s\,(u\cdot\nabla)\,\theta\,\cdot\,(-\Delta\,\theta)\,dx  = \s u_3\,\Delta \theta \,dx \end{align*}
Now,
\[ \s u_3\,\Delta \theta \,dx \leq \frac\kappa4 \nr \Delta \theta\nr^2_2 + \frac4\kappa \nr u\nr^2_{H_0} \leq  \frac\kappa4 \nr \Delta \theta\nr^2_2 + {\rm Const.}\]
and (for instance)
\begin{align*}
\s\,(u\cdot\nabla)\,\theta\,\cdot\,(-\Delta\,\theta)\,dx &\leq \nr \nabla w\nr^2_2 + \frac\kappa4 \nr\Delta \psi \nr^2_2 + \frac\kappa4 \nr \nabla \psi \nr^2_2\leq  {\rm Const.} + \frac\kappa4 \nr\Delta \psi \nr^2_2+ \frac\kappa4 \nr \nabla \psi \nr^2_2.
\end{align*}
Collecting these estimates, and using \eqref{thissec}, one can show immediately that $\theta \in L^\infty(\R, V_1)$. Furthermore, it can be also shown the existence of absorbing ball in  $V_0\times V_1$ following the same argument used for the velocity above.
\end{remark}
\section{Data Assimilation}\label{dasec}
Data assimilation belongs to the family  of semi-empirical methods that aim to enhance the prediction's quality of the physical phenomena at hand, by synchronizing information collected from measurements at coarse spatial scales with the theoretical models. This method was originally proposed for atmospheric predictions such as weather forecasting in \cite{daley1993atmospheric}. A new approach was introduced in \cite{azouani2013feedback} based on ideas from control theory (see also \cite{olson2003determining,olson2008determining}) consisting in the introduction of a feedback control term that nudges the large spatial scales of the model towards those of the reference solution. In the context of data assimilation, the large spatial scales of the reference solution are constructed by an interpolation operator from the spatial coarse scale measurements. This new approach was applied for several models including the $3D$ Navier-Sotokes-$\alpha$ in \cite{albanez2016continuous}, the  $2D$  B\'enard-convection where only measurement of velocity is needed in \cite{farhat2015continuous}, the $2D$ Navier-Stokes where the authors show the convergence with measurements of a single velocity component in \cite{farhat2016abridged},  the $3D$ Brinkman-Forchheimer-extended Darcy model in \cite{MTT} etc. Other features of this new approach were developed and investigated in several recent publications such as \cite{biswas2019downscaling,desamsetti2019efficient,garcia2020uniform,gesho2016computational,ibdah2020fully,jolly2019continuous,mondaini2018uniform} and references therein. In this paragraph, we present the data assimilation algorithm applied to our system \sys \, and state the result that can be readily obtained  by combining Theorems \ref{thmweak} and \ref{thmstrong} and techniques from \cite{farhat2017continuous}. Specifically, we state the convergence of the algorithm in the case of system $(\mathcal {S}-\mathcal{BC})$ with measurement of only two components of the velocity. Before going further, let us present the idea of the algorithm. Briefly speaking, the observation are introduced into the system $(\mathcal {S}-\mathcal{BC})$ by the mean of an interpolant operator $\mathcal I_h(u(t))$ where $u(t)$ denotes the velocity part of the solution $(u(t),\theta(t))$. This operator interpolates the observations of the system $(\mathcal {S}-\mathcal{BC})$ at coarse scale spatial resolution of size $h$ to be specified. We shall consider two types of interpolants on the space domain $\Omega_r:= [0,L]\times [0,L] \times[0,1]$:
\begin{align}\label{interpolant1}
\mathcal I_h:\, H^1(\Omega_r)\rightarrow L^2(\Omega_r) &\quad \text{satisfying}\quad \nr \psi-\mathcal I_h(\psi)\nr^2_\0 \leq c_0\,h^2\,\nr \nabla\,\psi\nr^2_{\0},
\end{align}
 for all vectors $\psi=(\psi_1,\psi_2)\in (H^1(\Omega_r))^2$, and
\begin{align}\label{interpolant2}
\mathcal I_h:\, H^2(\Omega_r)\rightarrow L^2(\Omega_r) &\quad \text{satisfying}\quad \nr \psi-\mathcal I_h(\psi)\nr^2_\0 \leq c_0\,h^2\,\nr \nabla\,\psi\nr^2_{\0}+ c_1\,h^4\nr \Delta\,\psi\nr^2_{\0},
\end{align}
for all vectors $\psi=(\psi_1,\psi_2)\in (H^2(\Omega_r))^2$,  where $c_0$ and $c_1$ denote dimensionless nonnegative constants. We refer the reader to  \cite{albanez2016continuous,azouani2014continuous,MTT} for physical examples of such interpolants. To simplify our notation, we will denote $v_\perp:=(v_1,v_2)^T$ for all three-components vector $v=(v_1,v_2,v_3)^T$. Assume that the initial data $(u_0,\theta_0)$ of system $(\mathcal {S}-\mathcal{BC})$ is missing. The idea is to recover this initial data as accurately as possible. This will be achieved by constructing a solution $(v(t),\eta(t))$ from the observations that satisfies
\begin{align*}
\mathcal S_{da} :\quad \left\lbrace\begin{array}{ll}
&\partial_t\,v_\perp-\nu\,\Delta\,v_\perp +B_0(v,v_\perp) +a\,|v|^{2\alpha}v_\perp+\nabla_\perp\,q=\mu\,\mathcal { I}_h(u_\perp-v_\perp),\\ \\
&\partial_t\,v_3-\nu\,\Delta\,v_3 +B_0(v,v_3) +a\,|v|^{2\alpha}v_3+\partial_3\,q = \eta \,{\bf e}_3,\\ \\
&\partial_t\,\eta-\kappa\,\Delta\,\eta +B_1(v,\eta) = v\cdot{\bf e}_3,\\ \\
& \nabla\cdot v=0,\;u_{\vert_{t=0}}=v_0,\; \eta_{\vert{{t=0}}}=\eta_0,\\
\end{array}
\right.
\end{align*}
 subjected to the set of boundary conditions $\mathcal {BC}$ (with $(u,\theta,p)$ replaced by $(v,\eta,q)$).
 Observe that we do not use measurements of the temperature and the third component of the velocity. The parameter $\mu>0$ is the nudging parameter and $h$ is the resolution parameter of the collected data. To avoid technical difficulties, one proceeds as for the original system $(\mathcal {S}-\mathcal{BC})$ by extending the domain from $\Omega_r$ to $\Omega$ and therefore extending the BC to $\mathcal {BC}_e$. It is important to notice that the operator $\mathcal I_h$ should be as well extended to act on functions in  $H^1(\Omega)$ and $H^1(\Omega)$ since the collected measurement are on the reference solution of $(\mathcal {S}-\mathcal{BC})$. Specifically, its extension should be even since it acts only on the first two components of the velocity.
 \vskip6pt
\noindent The extended data assimilation system  \sysss\, can be shown to admit unique weak and strong solutions equivalently to \sys. Most importantly, it can be shown that for a space resolution $h$ small enough and nudging parameter $\mu$ large enough the weak solution of $(v(t),\eta(t))$ of \sysss\, converges asymptotically in time at an exponential rate to the reference solution $(u(t),\theta(t))$ of system \sys\, in $H_0\times H^{-1}(\Omega)$ for interpolants of type \eqref{interpolant1} for all $\alpha>1$. Equivalently, the strong solution $(v(t),\eta(t))$ of \sysss\, converges asymptotically in time at an exponential rate to the reference solution $(u(t),\theta(t))$ of system \sys\, in $V_0\times H^{-1}(\Omega)$ for interpolants of type \eqref{interpolant2} but only for $1<\alpha<2$.

\vskip6pt
\noindent The proof is based on a combination of the arguments presented in the previous sections and ideas from \cite{farhat2016data,MTT}.

\section*{Acknowledgments}
\noindent This publication was made possible by NPRP grant\# S-0207-200290 from the Qatar National Research Fund (a member of Qatar Foundation). The findings herein reflect the work, and are solely the responsibility, of the authors. E.S.T. would like to thank the Isaac Newton Institute for Mathematical Sciences, Cambridge, for support and hospitality during the programme ``Mathematical aspects of turbulence: where do we stand?" where part of the work on this paper was undertaken. This work was supported in part by EPSRC grant no EP/R014604/1.

\bibliographystyle{siam}
\bibliography{Boussinesq-Titi-Trab}

\begin{thebibliography}{10}

\bibitem{albanez2016continuous}
{\sc D.~A. Albanez, H.~J. Nussenzveig~Lopes, and E.~S. Titi}, {\em Continuous
  data assimilation for the three-dimensional {N}avier--{S}tokes-$\alpha$
  model}, Asymptotic Analysis, 97 (2016), pp.~139--164.

\bibitem{Oliveira}
{\sc S.~Antontsev and H.~de~Oliveira}, {\em The {N}avier--{S}tokes problem
  modified by an absorption term}, Applicable Analysis, 89 (2010),
  pp.~1805--1825.

\bibitem{azouani2014continuous}
{\sc A.~Azouani, E.~Olson, and E.~S. Titi}, {\em Continuous data assimilation
  using general interpolant observables}, Journal of Nonlinear Science, 24
  (2014), pp.~277--304.

\bibitem{azouani2013feedback}
{\sc A.~Azouani and E.~S. Titi}, {\em Feedback control of nonlinear dissipative
  systems by finite determining parameters - a reaction-diffusion paradigm},
  Evolution Equations and Control Theory, 3 (2014), pp.~579--594.

\bibitem{Barret}
{\sc J.~W. Barrett and W.~Liu}, {\em Finite element approximation of the
  parabolic p-laplacian}, SIAM Journal on Numerical Analysis, 31 (1994),
  pp.~413--428.

\bibitem{Hakima2}
{\sc H.~Bessaih, S.~Trabelsi, and H.~Zorgati}, {\em Existence and uniqueness of
  global solutions for the modified anisotropic 3d {N}avier- {S}tokes
  equations}, ESAIM: Mathematical Modelling and Numerical Analysis, 50 (2016),
  pp.~1817--1823.

\bibitem{biswas2019downscaling}
{\sc A.~Biswas, C.~Foias, C.~F. Mondaini, and E.~S. Titi}, {\em Downscaling
  data assimilation algorithm with applications to statistical solutions of the
  {N}avier--{S}tokes equations}, in Annales de l'Institut Henri Poincar{\'e} C,
  Analyse non lin{\'e}aire, vol.~36, Elsevier, 2019, pp.~295--326.

\bibitem{Cai}
{\sc X.~Cai and Q.~Jiu}, {\em Weak and strong solutions for the incompressible
  {N}avier--{S}tokes equations with damping}, Journal of Mathematical Analysis
  and Applications, 343 (2008), pp.~799--809.

\bibitem{2}
{\sc A.~Celebi, V.~Kalantarov, and D.~U{\u{g}}urlu}, {\em Continuous dependence
  for the convective {B}rinkman--{F}orchheimer equations}, Applicable Analysis,
  84 (2005), pp.~877--888.

\bibitem{3}
{\sc A.~O. Celebi, V.~K. Kalantarov, and D.~U{\u{g}}urlu}, {\em On continuous
  dependence on coefficients of the {B}rinkman--{F}orchheimer equations},
  Applied mathematics letters, 19 (2006), pp.~801--807.

\bibitem{daley1993atmospheric}
{\sc R.~Daley}, {\em Atmospheric Data Analysis}, no.~2, Cambridge University
  Press, 1993.

\bibitem{desamsetti2019efficient}
{\sc S.~Desamsetti, H.~P. Dasari, S.~Langodan, E.~S. Titi, O.~Knio, and
  I.~Hoteit}, {\em Efficient dynamical downscaling of general circulation
  models using continuous data assimilation}, Quarterly Journal of the Royal
  Meteorological Society, 145 (2019), pp.~3175--3194.

\bibitem{farhat2015continuous}
{\sc A.~Farhat, M.~S. Jolly, and E.~S. Titi}, {\em Continuous data assimilation
  for the 2d {B}\'enard convection through velocity measurements alone},
  Physica D: Nonlinear Phenomena, 303 (2015), pp.~59--66.

\bibitem{farhat2016abridged}
{\sc A.~Farhat, E.~Lunasin, and E.~S. Titi}, {\em Abridged continuous data
  assimilation for the 2d {N}avier--{S}tokes equations utilizing measurements
  of only one component of the velocity field}, Journal of Mathematical Fluid
  Mechanics, 18 (2016), pp.~1--23.

\bibitem{farhat2016data}
\leavevmode\vrule height 2pt depth -1.6pt width 23pt, {\em Data assimilation
  algorithm for 3d {B}\'enard convection in porous media employing only
  temperature measurements}, Journal of Mathematical Analysis and Applications,
  438 (2016), pp.~492--506.

\bibitem{farhat2017continuous}
\leavevmode\vrule height 2pt depth -1.6pt width 23pt, {\em Continuous data
  assimilation for a 2d {B}\'enard convection system through horizontal
  velocity measurements alone}, Journal of Nonlinear Science, 27 (2017),
  pp.~1065--1087.

\bibitem{FoITmanem}
{\sc C.~Foias, O.~Manley, and R.~Temam}, {\em Attractors for the {B}\'enard
  problem: existence and physical bounds on their fractal dimension}, Nonlinear
  Analysis: Theory, Methods \& Applications, 11 (1987), pp.~939--967.

\bibitem{garcia2020uniform}
{\sc B.~Garc{\'\i}a-Archilla, J.~Novo, and E.~S. Titi}, {\em Uniform in time
  error estimates for a finite element method applied to a downscaling data
  assimilation algorithm for the {N}avier--{S}tokes equations}, SIAM Journal on
  Numerical Analysis, 58 (2020), pp.~410--429.

\bibitem{gesho2016computational}
{\sc M.~Gesho, E.~Olson, and E.~S. Titi}, {\em A computational study of a data
  assimilation algorithm for the two-dimensional {N}avier-{S}tokes equations},
  Communications in Computational Physics, 19 (2016), pp.~1094--1110.

\bibitem{app1}
{\sc W.~J. Goux, L.~A. Verkruyse, and S.~J. Saltert}, {\em The impact of
  {R}ayleigh--{B}\'enard convection on nmr pulsed-field-gradient diffusion
  measurements}, Journal of Magnetic Resonance (1969), 88 (1990), pp.~609--614.

\bibitem{Hsu}
{\sc C.~Hsu and P.~Cheng}, {\em Thermal dispersion in a porous medium},
  International Journal of Heat and Mass Transfer, 33 (1990), pp.~1587--1597.

\bibitem{ibdah2020fully}
{\sc H.~A. Ibdah, C.~F. Mondaini, and E.~S. Titi}, {\em Fully discrete
  numerical schemes of a data assimilation algorithm: uniform-in-time error
  estimates}, IMA Journal of Numerical Analysis, 40 (2020), pp.~2584--2625.

\bibitem{jolly2019continuous}
{\sc M.~S. Jolly, V.~R. Martinez, E.~J. Olson, and E.~S. Titi}, {\em Continuous
  data assimilation with blurred-in-time measurements of the surface
  quasi-geostrophic equation}, Chinese Annals of Mathematics, Series B, 40
  (2019), pp.~721--764.

\bibitem{Varga}
{\sc V.~K. Kalantarov and S.~Zelik}, {\em Smooth attractors for the
  {B}rinkman--{F}orchheimer equations with fast growing nonlinearities},
  Communications on Pure and Applied Analysis, 11 (2012), pp.~2037--2054.

\bibitem{Lions}
{\sc J.~L. Lions}, {\em {Q}uelques {M}{\'e}thodes {D}e {R}{\'e}solution {D}es
  {P}roblemes {A}ux {L}imites {N}on {L}in{\'e}aires}, Paris, Dunod,  (1969).

\bibitem{liu2019global}
{\sc H.~Liu, D.~Bian, and X.~Pu}, {\em Global well-posedness of the 3d
  {B}oussinesq-{MHD} system without heat diffusion}, Zeitschrift f{\"u}r
  angewandte Mathematik und Physik, 70 (2019), pp.~1--19.

\bibitem{12}
{\sc Y.~Liu and C.~Lin}, {\em Structural stability for
  {B}rinkman--{F}orchheimer equations.}, Electronic Journal of Differential
  Equations (EJDE)[electronic only], 2007 (2007), pp.~Paper--No.

\bibitem{zbMATH06433791}
{\sc M.~{Louaked}, N.~{Seloula}, S.~{Sun}, and S.~{Trabelsi}}, {\em {A
  pseudocompressibility method for the incompressible {B}rinkman-{F}orchheimer
  equations.}}, {Differ. Integral Equ.}, 28 (2015), pp.~361--382.

\bibitem{zbMATH06840730}
{\sc M.~{Louaked}, N.~{Seloula}, and S.~{Trabelsi}}, {\em {Approximation of the
  unsteady {B}rinkman-{F}orchheimer equations by the pressure stabilization
  method.}}, {Numer. Methods Partial Differ. Equations}, 33 (2017),
  pp.~1949--1965.

\bibitem{MTT}
{\sc P.~A. Markowich, E.~S. Titi, and S.~Trabelsi}, {\em Continuous data
  assimilation for the three-dimensional {B}rinkman--{F}orchheimer-extended
  {D}arcy model}, Nonlinearity, 29 (2016), p.~1292.

\bibitem{mondaini2018uniform}
{\sc C.~F. Mondaini and E.~S. Titi}, {\em Uniform-in-time error estimates for
  the postprocessing {G}alerkin method applied to a data assimilation
  algorithm}, SIAM Journal on Numerical Analysis, 56 (2018), pp.~78--110.

\bibitem{olson2003determining}
{\sc E.~Olson and E.~S. Titi}, {\em Determining modes for continuous data
  assimilation in 2d turbulence}, Journal of statistical physics, 113 (2003),
  pp.~799--840.

\bibitem{olson2008determining}
\leavevmode\vrule height 2pt depth -1.6pt width 23pt, {\em Determining modes
  and {G}rashof number in 2d turbulence: a numerical case study}, Theoretical
  and Computational Fluid Dynamics, 22 (2008), pp.~327--339.

\bibitem{17}
{\sc Y.~Ouyang and L.-e. Yang}, {\em A note on the existence of a global
  attractor for the {B}rinkman--{F}orchheimer equations}, Nonlinear Analysis:
  Theory, Methods \& Applications, 70 (2009), pp.~2054--2059.

\bibitem{18}
{\sc L.~E. Payne and B.~Straughan}, {\em Convergence and continuous dependence
  for the {B}rinkman--{F}orchheimer equations}, Studies in Applied Mathematics,
  102 (1999), pp.~419--439.

\bibitem{sohr2012navier}
{\sc H.~Sohr}, {\em The Navier-Stokes equations: An Elementary Functional
  Analytic Approach}, Springer Science \& Business Media, 2012.

\bibitem{21}
{\sc B.~Straughan}, {\em {S}tability {A}nd {W}ave {M}otion {I}n {P}orous
  {M}edia}, vol.~165, Springer Science \& Business Media, 2008.

\bibitem{Temam}
{\sc R.~Temam}, {\em {N}avier-{S}tokes {E}quations: {T}heory {A}nd {N}umerical
  {A}nalysis}, vol.~343, American Mathematical Soc., 2001.

\bibitem{Temam1}
\leavevmode\vrule height 2pt depth -1.6pt width 23pt, {\em
  {I}nfinite-{D}imensional {D}ynamical {S}ystems {I}n {M}echanics {A}nd
  {P}hysics}, vol.~68, Springer Science \& Business Media, 2012.

\bibitem{titi2018global}
{\sc E.~S. Titi and S.~Trabelsi}, {\em Global well-posedness of a 3d mhd model
  in porous media}, Journal of Geometric Mechanics, 11 (2019), pp.~621--637.

\bibitem{23}
{\sc D.~U{\u{g}}urlu}, {\em On the existence of a global attractor for the
  {B}rinkman--{F}orchheimer equations}, Nonlinear Analysis: Theory, Methods \&
  Applications, 68 (2008), pp.~1986--1992.

\bibitem{24}
{\sc B.~Wang and S.~Lin}, {\em Existence of global attractors for the
  three-dimensional {B}rinkman--{F}orchheimer equation}, Mathematical Methods
  in the Applied Sciences, 31 (2008), pp.~1479--1495.

\bibitem{app2}
{\sc J.-j. WANG and X.-q. MA}, {\em A study on the chaotic behavior of {LNG}
  after stratification in main stream region of storage tank}, Journal of
  Hydrodynamics (Ser. A), 1 (2007), p.~004.

\bibitem{YCL}
{\sc Y.~You, C.~Zhao, and S.~Zhou}, {\em The existence of uniform attractors
  for 3d {B}rinkman--{F}orchheimer equations}, Discrete \& Continuous Dynamical
  Systems-A, 32 (2012), pp.~3787--3800.

\end{thebibliography}

\end{document}